\newtheorem{remark}{Remark}
\newtheorem{thm}{Theorem}
\newtheorem{lemma}{Lemma}
\newtheorem{proposition}{Proposition}
\newtheorem{corollary}{Corollary}
\newcommand{\ind}{\mathbb I}
\newcommand{\real}{\mathbb R}
\newcommand{\bola}[2]{\mathcal B (#1,#2)}
\newcommand{\norm}[1]{\left\Vert#1\right\Vert}
\newcommand{\abs}[1]{\left\vert#1\right\vert}
\newcommand{\esp}[2]{\mathbb{E}_{#1}\hspace{-0.03cm}\left(#2\right)}
\newcommand{\prob}[2]{\mathbb{P}_{#1}\hspace{-0.03cm}\left(#2\right)}
\newcommand{\var}[2]{\mbox{var}_{#1}\hspace{-0.03cm}\left(#2\right)}
\newcommand{\Cn}{\mathcal{C}_n}  
\newcommand{\Dn}{\mathcal{D}_n}  
\newcommand{\etahat}{\widehat{\eta}}
\newcommand{\muu}[1]{\mu\big(#1\big)}
\newcommand{\pa}[1]{\left(#1\right)}
\newcommand{\llave}[1]{\left\{#1\right\}}
\newcommand{\sop}[1]{\textrm{supp}\left(#1\right)}
\newcommand{\X}{\mathcal{X}}
\newcommand{\Y}{\mathcal{Y}}
\newcommand{\Hi}{\mathcal{H}}
\newcommand{\e}{\bm{e}}
\newcommand{\xf}{x}
\begin{document}

\begin{center}
		\Large \bf  \textcolor{black}{Nonparametric regression based on discretely sampled curves}
\end{center}

\begin{center}
	\bf Liliana Forzani$^\dag$, Ricardo Fraiman$^*$, and Pamela Llop$^{\dag}$ 
\end{center}

\begin{center}
	$^*$  Centro de Matem\'atica, Facultad de Ciencias (UdelaR), Uruguay.\\
	$^{\dag}$ Facultad de Ingenier\'ia Qu\'imica, UNL and researchers of CONICET, Argentina.
\end{center}

\begin{abstract}
In the context of nonparametric regression, we study conditions under which the consistency (and rates of convergence) of  estimators built from discretely sampled curves can be derived from the consistency of estimators based on the unobserved  whole trajectories. As a consequence, we derive asymptotic results for most of the regularization techniques used in functional  data analysis, including smoothing and basis representation. 
\end{abstract}


\section{Introduction}

Technological  progress in collecting and storing data provides datasets recorded at finite grids
of points that become denser and denser over time. Although in practice data always comes in the 
form of finite dimensional vectors, from the theoretical point of view, the classic multivariate 
techniques are not well suited to deal with data which, essentially, is infinite dimensional and 
whose observations within the same curve are highly correlated.

From a practical point of view, a commonly used technique to treat this kind of data is to 
transform the (observed) discrete values into a {function} via smoothing or a series approximations (see \cite{Bigot_2006}, \cite{Kneip_and_Ramsay_2008}, \cite{Ramsay_and_Silverman997, Ramsay_and_Silverman002, Ramsay_and_Silverman005}, or chapter 9
of \cite{oxford} and the references therein). For the analysis, we
can use the intrinsic infinite dimensional nature of the data and assume the existence of
continuous underlying stochastic processes which are observed ideally at every point. In this
context, the theoretical analysis is performed on the functional space where they take 
values (see
\cite{Ferraty_and_vieu_2006}). In what follows, we will refer to this last setting as the \it full
model\rm.  

Nonparametric regression is an important tool in functional data analysis (FDA) which has received considerable attention from 
different authors in both settings. For the full model, consistency results have been obtained by, among 
others, \cite{abraham_biau_cadre_2006}, \cite{Biau_Bunea_Wegkamp_2005}, 
\cite{Biau_Cerou_Guyader_2010}, \cite{Burba_Ferraty_Vieu_2009}, \cite{cerou_and_guyader_2006}, 
\cite{Ferraty_and_vieu_2006}, \cite{lian011}, and \cite{muller_2011}. In particular,  
\cite{forzani_fraiman_llop_2012} (see also the Corrigendum \cite{forzani_fraiman_llop_2014}) prove  a consistency 
result close to universality for the kernel (with random bandwidth) estimator. The first 
contribution of the present paper will be to prove the consistency of the $k$-nearest neighbor with kernel 
regression estimator (Proposition \ref{consistencia_knn-k}) when the full trajectories are observed. 
This family, considered by \cite{Collomb_1980}, combines the smoothness properties of the kernel function with 
the locality properties of the $k$-nearest neighbors distances.

Regarding regression when discretized curves are available, 
\cite{hart_1986} study the mean square consistency of the kernel estimator 
when the sample size as well as the grid size discretization go to infinity. More precisely, from
 independent reali\-zations of a random process  with continuous covariance structure, they estimate
the regression function, assuming its smoothness. Under the same assumptions, but using 
interpolation of the data, \cite{rice_1991},  in a mainly practical approach, propose a method to
estimate the regression function via smoothing splines (see  also \cite{hastie_1990}). More recently,
\cite{Cai_2011} establish minimax rates of convergence of estimators of the mean
based on discretized sampled data while \cite{Cai_2016} establish the minimax rates of convergence for the covariance operator when data are observed on a lattice (see also \cite{Hall_2006} for the problem of principal components analysis for longitudinal data). In this context it is natural to assess the relation
between the \textit{ideal} nonparametric regression estimator constructed with the entire
set of curves and the one computed with the discretized sample. In this direction, we are
interested in addressing the following question:
\begin{itemize}
\item  Under what conditions can the consistency (and rates of convergence) of the estimate 
computed with the discretized  trajectories be derived from the consistency of the estimate 
based on the full curves?
\end{itemize}

Clearly, the asymptotic  results for estimates computed with the discretized sample will not be a
direct consequence of those for the full model. However, we provide reasonable conditions
in order to still get the consistency and find rates of convergence of the estimator. In this context
we state the results for the well known kernel and $k$-nearest neighbor with kernel estimators. These results are a consequence of a more general result, which, besides discretization, also includes the cases of regularization via
 smoothing and basis representation.

This paper is organized as follows: In Section \ref{functional} we state the consistency of the
$k$-nearest neighbor with kernel estimator in the infinite dimensional setting (for the full model). \textcolor{black}{This result is not only interesting by itself but also, it will be used to prove consistency results when discretely sample data are available.} In Section \ref{general} we provide conditions for the consistency of the kernel and $k$-nearest neighbor with kernel estimators when we do not observe the whole trajectories but only a function of them (Theorems \ref{teo3} and \ref{teo4}). In Section \ref{particulares} the results for discretization, smoothing and basis representation are obtained as a consequence of Theorems \ref{teo3} and \ref{teo4}. Proofs are given in Appendices A and B.


\section{\textcolor{black}{Consistency results for fully observed curves}}\label{functional}

In this section we provide two $L^2$-consistency  results
for the full model, i.e., when ideally all trajectories are observed at every point
of the interval $[0,1]$. The first one corresponds to kernel estimates, and was obtained in 
\cite{forzani_fraiman_llop_2012}, while the second one for $k$-NN with kernel estimates is derived
in the present paper. \textcolor{black}{Both results will be used, in Section \ref{general}, to prove the consistency of that estimators when only discretely sampled curves in $[0,1]$ are observed.}

\

Let $(\Hi,d)$ be a separable metric space  and let $(\X_1,Y_1), \dots (\X_n, Y_n)$ be independent
identically distributed (i.i.d.) random elements in $\Hi \times \real$ with
the same law as the pair $(\X,Y)$ fulfilling the model:
\begin{equation}\label{modelo}
 Y = \eta(\X) + e,
\end{equation}
where the error $e$ satisfies  $\esp{e\vert\X}{e|\X}=0$ and $\var{e\vert\X}{e|\X} = \sigma^2 <
\infty$.
In this context, the regression function $E(Y|\X) = \eta(\X) $ can
be estimated by
\begin{equation}\label{estimador_general}
\etahat_n(\X) = \sum_{i=1}^n  W_{ni}(\X) Y_i,
\end{equation}
where the weights $W_{ni}(\X) = W_{ni}(\X,\X_1,\ldots,\X_n) \ge 0$ and
$\sum_{i=1}^n W_{ni}(\X) = 1$. In this paper, we first consider the weights corresponding to 
the family of kernel estimators given by
\begin{equation}\label{pesos_kernel}	
W_i(\X)=\frac{K\big(\frac{d(\X,\X_i)}{h_n(\X)}\big)}{\sum_{j=1}^n
K\big(\frac{d(\X,\X_j)}{h_n(\X)}\big)},
\end{equation}
where $K$ is a regular kernel, i.e., there are constants  $0< c_1 < c_2 < \infty$ such that $c_1
\ind_{[0,1]}(u) \le K(u) \le c_2 \ind_{[0,1]}(u)$. Here $0/0$ is assumed to be $0$. In this
general setting, \cite{forzani_fraiman_llop_2012} proved the following result.
\begin{proposition}[Theorem 5.1 in \cite{forzani_fraiman_llop_2012}] \label{consistencia_kernel}  Assume that
\begin{enumerate}
\item[K1)]  $K$ is a regular and Lipschitz kernel;
\item[F1)] $(\Hi,d)$  is a separable metric space;
\item[F2)] $\{(\X_i, Y_i)\}_{i\ge 1}$ are i.i.d. random elements with the same law
as the pair $(\X,Y) \in \Hi \times \real$ fulfilling model (\ref{modelo}) \textcolor{black}{with, for each $i=1,\ldots,n$, joint distribution $\mathbb{P}_{\X,\X_i}$};
\item[F3)] $\mu $ is a Borel probability measure of $\X$ and  $\eta \in
L^2(\Hi,\mu) = \{f: \Hi \to \real: \int_{\Hi} f^2 (z) d\mu(z) < \infty\}$ is a bounded function
which satisfies the \textit{Besicovitch condition}:
\begin{equation}\label{ec:besicovitch}
\lim_{\delta \to 0} \frac{1}{\muu{\mathcal{B}(\X, \delta)}} \int_{\mathcal{B}(\X, \delta)}
\abs{\eta(z)-\eta(\X)} \, d\mu(z) = 0,
\end{equation}
in probability, where $\mathcal{B}(\X, \delta)$  is the closed ball  of center $\X$ and
radius $\delta$ with respect to $d$.
\end{enumerate}
For any $\xf \in \sop{\mu}$ and any sequence $h_n(\xf) \to 0$ such that $\frac{n \mu(\mathcal{B}(\xf,h_n(\xf))}{\log n} \to
\infty$, the estimator given in (\ref{estimador_general}) with weights
given in (\ref{pesos_kernel}) satisfies
\[
\lim_{n \to \infty} \esp{}{(\etahat_n(\X) - \eta(\X))^2} =0.
\]
\end{proposition}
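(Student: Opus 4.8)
The plan is to run the classical bias--variance decomposition for Nadaraya--Watson-type weights; the only non-standard features here are that $(\Hi,d)$ is merely a separable metric space and that the regularity of $\eta$ enters solely through the Besicovitch condition (\ref{ec:besicovitch}). Write $p_n(\xf)=\muu{\bola{\xf}{h_n(\xf)}}$ and let $N_n(\xf)=\#\{i\le n:\ d(\xf,\X_i)\le h_n(\xf)\}$ be the number of sample points falling in the ball $\bola{\xf}{h_n(\xf)}$. Regularity of $K$ gives, on the support of the weights (\ref{pesos_kernel}), the two-sided control $0\le W_i(\xf)\le (c_2/c_1)\,N_n(\xf)^{-1}$ (all weights being $0$, by the $0/0=0$ convention, when $N_n(\xf)=0$), together with $\sum_i W_i(\xf)=1$. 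I would then split
\begin{equation*}
\etahat_n(\X)-\eta(\X)=\underbrace{\sum_{i=1}^n W_i(\X)\big(\eta(\X_i)-\eta(\X)\big)}_{=:B_n}+\underbrace{\sum_{i=1}^n W_i(\X)\,e_i}_{=:V_n},
\end{equation*}
and, since $(a+b)^2\le 2a^2+2b^2$, prove separately that $\esp{}{B_n^2}\to0$ and $\esp{}{V_n^2}\to0$.

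For the stochastic term I would condition on $\XX=(\X,\X_1,\dots,\X_n)$: since $\esp{}{e_i\mid\X_i}=0$, $\var{}{e_i\mid\X_i}=\sigma^2$ and the errors are independent,
\begin{equation*}
\esp{}{V_n^2\mid\XX}=\sigma^2\sum_{i=1}^n W_i(\X)^2\le \sigma^2\max_{i\le n}W_i(\X)\le \frac{c_2\,\sigma^2}{c_1\,N_n(\X)},
\end{equation*}
which is bounded by $c_2\sigma^2/c_1$ and vanishes as soon as $N_n(\X)\to\infty$. Because $\X$ is independent of the sample, conditionally on $\X=\xf$ the count $N_n(\xf)$ is $\mathrm{Binomial}(n,p_n(\xf))$; the hypothesis $n\,p_n(\xf)/\log n\to\infty$ together with a multiplicative Chernoff bound gives $\prob{}{N_n(\xf)\le \tfrac12\,n\,p_n(\xf)}\le \exp\!\big(-n\,p_n(\xf)/8\big)$, a summable sequence, so by Borel--Cantelli $N_n(\xf)\to\infty$ almost surely, hence $N_n(\X)\to\infty$ a.s.; the dominated convergence theorem then yields $\esp{}{V_n^2}\to0$. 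The same estimate also shows $n\,p_n(\X)/N_n(\X)\to 1$ in probability.

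For the bias term I would note that $\abs{B_n}\le\sum_i W_i(\X)\,\abs{\eta(\X_i)-\eta(\X)}\le 2\norm{\eta}_\infty$, so by bounded convergence it suffices to show $B_n\to0$ in probability. Since the weights are supported on $\{d(\X,\X_i)\le h_n(\X)\}$,
\begin{equation*}
\abs{B_n}\le \frac{c_2}{c_1}\cdot\frac{n\,p_n(\X)}{N_n(\X)}\cdot T_n,\qquad T_n:=\frac{1}{n\,p_n(\X)}\sum_{i=1}^n \ind\{d(\X,\X_i)\le h_n(\X)\}\,\abs{\eta(\X_i)-\eta(\X)};
\end{equation*}
the middle factor tends to $1$ in probability by the previous step, so it remains to handle $T_n$. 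Conditioning on $\X$ and using that the $\X_i$ are i.i.d.\ $\mu$ and independent of $\X$,
\begin{equation*}
\esp{}{T_n\mid\X}=\frac{1}{\muu{\bola{\X}{h_n(\X)}}}\int_{\bola{\X}{h_n(\X)}}\abs{\eta(z)-\eta(\X)}\,d\mu(z),
\end{equation*}
which is exactly the quantity whose limit is assumed in (\ref{ec:besicovitch}) (with $\delta=h_n(\X)\to0$) and hence tends to $0$ in probability, while $\var{}{T_n\mid\X}\le 4\norm{\eta}_\infty^2/\big(n\,p_n(\X)\big)\to0$; therefore $T_n\to0$ in probability, and consequently $B_n\to0$ in probability, which finishes the argument.

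The step I expect to be the main obstacle is the rigorous passage from the empirical local average $T_n$ (a sum over the random number $N_n(\X)$ of neighbours) to the Besicovitch integral: one must simultaneously manage a random denominator $N_n(\X)$, a radius $h_n(\X)$ that shrinks to $0$ but is evaluated at the random point, and the fact that (\ref{ec:besicovitch}) is assumed only in probability rather than pointwise or uniformly in $\xf$. The concentration estimate for $N_n(\X)$ powered by the condition $n\,p_n(\xf)/\log n\to\infty$, together with the uniform integrability furnished by the boundedness of $\eta$, is what allows these effects to be decoupled.
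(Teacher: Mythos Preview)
Your argument is correct, and the point you flag at the end---passing from the Besicovitch condition stated for deterministic $\delta\to0$ to the random radius $h_n(\X)$---is exactly the place that needs a line of justification (split on the event $\{h_n(\X)>\delta_0\}$, whose probability vanishes, and use that the Besicovitch condition controls the complementary event uniformly in $\delta\le\delta_0$). One small omission: your decomposition $\etahat_n(\X)-\eta(\X)=B_n+V_n$ uses $\sum_iW_i(\X)=1$, which fails on $\{N_n(\X)=0\}$; but this event has vanishing probability and the integrand is bounded by $\norm{\eta}_\infty^2$, so its contribution is negligible.

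As for the comparison: the present paper does not actually prove this proposition---it is quoted as Theorem~5.1 of \cite{forzani_fraiman_llop_2012}. The machinery that paper develops, and which is reproduced here in the proof of Proposition~\ref{consistencia_knn-k}, is different from yours: rather than a direct bias--variance split, they pass through an abstract Stone-type criterion (Theorem~\ref{stone_version} and Corollary~\ref{cor:2stone}). One first dominates the kernel weights by the uniform weights $U_{ni}(\xf)=\ind_{\{d(\X_i,\xf)\le h_n(\xf)\}}/N_n(\xf)$ via regularity of $K$, and then verifies three conditions on the $U_{ni}$: (i) asymptotic concentration of the weights on shrinking balls, (ii) $\esp{}{\max_iU_{ni}(\X)}\to0$, and (iii) a Stone-type inequality $\esp{}{\sum_iU_{ni}(\X)(\eta^*(\X_i)-\eta(\X_i))^2}<\epsilon$ for $\eta^*$ a continuous $L^2$-approximant of $\eta$. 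Condition~(iii) is handled by a three-term splitting through $\eta^*$ and a lemma bounding $\esp{\Dn}{\sum_iU_{ni}(\xf)f(\X_i)}$ by a constant times the local average $\muu{\bola{\xf}{h_n(\xf)}}^{-1}\int_{\bola{\xf}{h_n(\xf)}}f\,d\mu$, after which the Besicovitch condition enters. Your route is more self-contained and arguably cleaner for this particular estimator: you exploit directly that $\eta$ itself satisfies the Besicovitch condition (rather than detouring through a continuous approximant), and you get the concentration of $N_n(\X)$ from a Chernoff bound rather than invoking a separate lemma. The paper's approach, on the other hand, isolates a reusable criterion (Theorem~\ref{stone_version}) that applies to any weight scheme dominated by the uniform weights, which is what lets them treat the $k$-NN-with-kernel case in Proposition~\ref{consistencia_knn-k} with essentially the same argument.
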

\begin{remark}\label{besicovitch}
The Besicovitch condition in \textit{F3} is a differentiation type condition which, as is well known, in finite dimensional spaces automatically holds for any integrable
function $\eta$. Unfortunately, it is no longer true in infinite dimensional spaces and it can be proved, for instance, that it is necessary in order to get the $L_1$-consistency of uniform kernel estimates (see Proposition 5.1 in \cite{forzani_fraiman_llop_2012}). However, it holds in a general setting if, for instance, the function $\eta$ is continuous. \textcolor{black}{For a deeper reading on this topic see \cite{cerou_and_guyader_2006} or \cite{forzani_fraiman_llop_2012}.}
\end{remark}
\begin{remark}\label{h-mayor}
Note that  for $\xf \in \sop{\mu}$ the consistency of this estimator holds for every sequence
$\tilde{h}_n (\xf)\to 0$ such that $\tilde{h}_n(\xf)\ge h_n(\xf)$, where $ h_n(\xf)$ is given in Proposition \ref{consistencia_kernel},  since if
$\tilde{h}_n(\xf)\ge h_n(\xf)$, then $\frac{n
\mu(\mathcal{B}(\xf,\tilde{h}_n(\xf))}{\log n} \ge \frac{n
\mu(\mathcal{B}(\xf,h_n(\xf))}{\log n} \to \infty$.
\end{remark}
The existence of a sequence verifying $\frac{n \mu(\mathcal{B}(\xf,h_n(\xf))}{\log n} \to \infty$ in Proposition \ref{consistencia_kernel} follows from the next lemma. 
\begin{lemma}[Lemma A.5 in \cite{forzani_fraiman_llop_2012}]\label{existehn}
For any $\xf \in \sop{\mu}$, there exists a sequence of positive real numbers $h_n(\xf)
\to 0$ such that $\frac{n \mu(\mathcal{B}(\xf,h_n(\xf))}{\log n} \to \infty$.
\end{lemma}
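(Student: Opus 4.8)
The plan is a staircase (diagonal) construction. Since $\xf \in \sop{\mu}$, every closed ball $\mathcal{B}(\xf,\delta)$ has strictly positive $\mu$-measure; in particular, setting $p_k := \muu{\mathcal{B}(\xf,1/k)}$ for $k \in \mathbb{N}$, each $p_k$ is a fixed positive constant. I would then exploit that $n/\log n \to \infty$ in order to absorb each of these constants in turn.

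Concretely, the first step is: for each fixed $k$, since $p_k > 0$ and $n/\log n \to \infty$, pick an index $N_k$ with $n\,p_k/\log n \ge k$ for all $n \ge N_k$; passing to a larger value if necessary, arrange $N_1 < N_2 < \cdots \to \infty$. The second step is to define the sequence by $h_n(\xf) := 1$ for $n < N_1$ and $h_n(\xf) := 1/k$ for $N_k \le n < N_{k+1}$. Then $h_n(\xf) > 0$ for every $n$, and since $h_n(\xf) \le 1/k$ whenever $n \ge N_k$ while $N_k \to \infty$, we obtain $h_n(\xf) \to 0$. The third step is the verification: for $n$ lying in the block $N_k \le n < N_{k+1}$, monotonicity of $\delta \mapsto \muu{\mathcal{B}(\xf,\delta)}$ together with the choice of $N_k$ gives
\[
\frac{n\,\muu{\mathcal{B}(\xf,h_n(\xf))}}{\log n} = \frac{n\,p_k}{\log n} \ge k,
\]
and $k \to \infty$ as $n \to \infty$, so the left-hand side diverges, which is exactly the claim.

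The argument is essentially elementary, so there is no serious obstacle; the only thing to keep straight is the tension between the two requirements — $h_n(\xf) \to 0$ forces the balls (hence their measures) to shrink, whereas $n\,\muu{\mathcal{B}(\xf,h_n(\xf))}/\log n$ must blow up. The staircase resolves this by holding each radius $1/k$ fixed over a long enough block of indices (up to $N_{k+1}$), so that the growing factor $n/\log n$ dominates the small-but-positive constant $p_k$ before the radius is lowered again. Note also that closedness of the balls plays no role here: only positivity of $p_k$ and monotonicity of the measure function are used.
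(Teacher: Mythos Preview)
Your argument is correct. The paper does not supply its own proof of this lemma; it is merely quoted as Lemma~A.5 of \cite{forzani_fraiman_llop_2012}, so there is nothing in the present paper to compare your approach against. Your staircase construction is the standard way to establish such a result: fix positive constants $p_k=\muu{\mathcal{B}(\xf,1/k)}$, use $n/\log n\to\infty$ to find thresholds $N_k$, and hold the radius at $1/k$ on each block $[N_k,N_{k+1})$. One cosmetic remark: in your third step you invoke monotonicity of $\delta\mapsto\muu{\mathcal{B}(\xf,\delta)}$, but since $h_n(\xf)=1/k$ exactly on the block, the displayed equality already holds without it; monotonicity is only needed if you had defined $h_n(\xf)\le 1/k$ rather than $=1/k$. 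This does not affect correctness.
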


Let $H_n(\xf)$ be the distance from $\xf$ to its $k_n$-nearest neighbor among $\{\X_1,\ldots, \X_n\}$. Recall that the $k_n$-nearest neighbor of $\xf$ among $\{\X_1,\ldots, \X_n\}$ is the sample point $\X_i$ reaching the $k_n$th smallest distance to  $\xf$ in the sample. Then, when the bandwidth in (\ref{pesos_kernel}) is given by $H_n(\xf)$, we
obtain the family of $k_n$-nearest neighbor ($k$-NN) with kernel estimates. In order to get consistency for this family of estimators, we need to prove that $H_n(\xf) \to  0$, as stated in the following lemma. 
\begin{lemma} [Lemma A.4 in \cite{forzani_fraiman_llop_2012}] \label{lema:d(Xk,X)to0}
Let $\Hi$ be a separable metric space, $\mu$ a Borel probability measure,
and $\llave{\X_i}_{i=1}^n$ a random sample of $\X$. If $\xf \in
\sop{\mu}$ and $k_n$ is a sequence of positive real numbers such that $k_n \to  \infty$ and $k_n/n
\to  0$, then $H_n(\xf) \to  0$.
\end{lemma}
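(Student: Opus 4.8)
The plan is to show that for every fixed $\delta>0$, the ball $\mathcal{B}(\xf,\delta)$ eventually contains at least $k_n$ sample points with probability one, which forces $H_n(\xf)\le\delta$ eventually. Since $\xf\in\sop{\mu}$, by definition $p_\delta:=\mu(\mathcal{B}(\xf,\delta))>0$ for each $\delta>0$. Let $N_n(\delta)=\sum_{i=1}^n\ind\{\X_i\in\mathcal{B}(\xf,\delta)\}$ be the number of sample points falling in the ball; then $N_n(\delta)\sim\mathrm{Binomial}(n,p_\delta)$, so $\esp{}{N_n(\delta)}=np_\delta\to\infty$. The event $\{H_n(\xf)>\delta\}$ is exactly the event $\{N_n(\delta)<k_n\}$, because the $k_n$-nearest neighbor lies within distance $\delta$ of $\xf$ if and only if at least $k_n$ of the $\X_i$ are within that distance.

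The key estimate is a concentration bound showing $\prob{}{N_n(\delta)<k_n}$ is summable in $n$. Since $k_n/n\to 0$ while $p_\delta$ is a fixed positive constant, for all $n$ large enough we have $k_n\le \tfrac12 np_\delta$, so that
\[
\prob{}{N_n(\delta)<k_n}\le\prob{}{N_n(\delta)<\tfrac12 np_\delta}=\prob{}{N_n(\delta)-np_\delta< -\tfrac12 np_\delta}.
\]
A Chernoff/Hoeffding bound for the binomial then gives $\prob{}{N_n(\delta)<\tfrac12 np_\delta}\le e^{-c n}$ for some constant $c=c(p_\delta)>0$, which is summable. By the Borel–Cantelli lemma, almost surely $H_n(\xf)\le\delta$ for all $n$ sufficiently large. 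Since $\delta>0$ was arbitrary, $H_n(\xf)\to 0$ almost surely, and in particular in probability.

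The only delicate point is making sure the reduction $\{H_n(\xf)>\delta\}=\{N_n(\delta)<k_n\}$ is stated carefully (closed versus open balls, and the convention for ties among equidistant sample points), but with closed balls as used throughout the paper this is immediate: $H_n(\xf)$ is the $k_n$th smallest of the distances $d(\xf,\X_i)$, and this quantity is $\le\delta$ precisely when at least $k_n$ of those distances are $\le\delta$. Everything else is the routine binomial tail estimate above; the hypotheses $k_n\to\infty$ and $k_n/n\to 0$ are used only to guarantee $k_n\le\tfrac12 np_\delta$ eventually (the divergence $k_n\to\infty$ is not actually needed for this particular conclusion, only $k_n/n\to 0$, but it is harmless to keep it in the statement).
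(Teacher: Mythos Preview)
Your proof is correct and follows the standard route for this classical fact. Note, however, that the present paper does not actually prove this lemma: it is quoted verbatim as Lemma~A.4 of \cite{forzani_fraiman_llop_2012}, so there is no in-paper argument to compare against. Your approach via the binomial count $N_n(\delta)$, a Chernoff-type tail bound, and Borel--Cantelli is exactly how such statements are typically established. Two minor cosmetic remarks: (i) the passage from ``for each fixed $\delta>0$, almost surely $H_n(x)\le\delta$ eventually'' to ``almost surely $H_n(x)\to 0$'' implicitly takes a countable sequence $\delta_m\downarrow 0$ so that the union of exceptional null sets is still null---worth one extra sentence; (ii) you are right that only $k_n/n\to 0$ is needed for this conclusion, the hypothesis $k_n\to\infty$ being used elsewhere in the paper (e.g.\ in the proof of Proposition~\ref{consistencia_knn-k}) to control $\max_i W_{ni}$.
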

Although the distance from $\xf$ to its $k_n$-NN among $\{\X_1,\ldots, \X_n\}$
converges to zero, to prove first the consistency of this estimator, we cannot apply
directly Proposition \ref{consistencia_kernel} because we do not know that $H_n(\xf)$ satisfies $\frac{n \mu(\mathcal{B}(\xf,H_n(\xf))}{\log n} \to \infty$. However, as we will see in the next result, we can still prove the mean square consistency of this estimator under the same weak conditions as in Proposition \ref{consistencia_kernel}, whose proof can be found in the Appendix \ref{proofs}.
\begin{proposition} \label{consistencia_knn-k}
Assume K1, F1--F3 hold. Let $k_n$ be a sequence of positive real numbers such that $k_n\to \infty$, $k_n/n \to 0$ and let $H_n(\xf)$ be the distance from $\xf$ to its $k_n$-nearest neighbor among $\{\X_1,\ldots, \X_n\}$. Then, the estimator given by (\ref{estimador_general}) with weights given in
(\ref{pesos_kernel}) is mean square consistent for any sequence $h_n(\xf) \to 0$ such that
$h_n(\xf)\ge H_n(\xf)$, $\xf \in \sop{\mu}$.
\end{proposition}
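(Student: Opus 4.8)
The plan is to run the usual bias--variance decomposition, the point being that although the random bandwidth $H_n(\xf)$ (hence any $h_n(\xf)\ge H_n(\xf)$) need not satisfy $n\,\mu(\mathcal{B}(\xf,h_n(\xf)))/\log n\to\infty$, the defining property of $H_n(\xf)$ already supplies what that condition is used to guarantee, namely a \emph{deterministic} lower bound on the number of sample points inside the smoothing ball. Writing $Y_i=\eta(\X_i)+e_i$ and setting
\[
A_n:=\sum_{i=1}^n W_i(\X)\,\big(\eta(\X_i)-\eta(\X)\big),\qquad B_n:=\sum_{i=1}^n W_i(\X)\,e_i ,
\]
we have $\esp{}{(\etahat_n(\X)-\eta(\X))^2}\le 2\,\esp{}{A_n^2}+2\,\esp{}{B_n^2}$. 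For $B_n$, since $h_n(\xf)\ge H_n(\xf)$ the closed ball $\mathcal{B}(\xf,h_n(\xf))$ contains at least $k_n$ of the $\X_i$ (the $k_n$ nearest neighbours of $\xf$), so by regularity of $K$ the denominator in (\ref{pesos_kernel}) is at least $c_1k_n$, whence $\sum_i W_i(\X)^2\le\max_i W_i(\X)\le c_2/(c_1k_n)$; conditioning on $(\X,\X_1,\dots,\X_n)$ and using $\esp{}{e_i\mid\X_i}=0$, $\esp{}{e_i^2\mid\X_i}=\sigma^2$ (so that the cross terms vanish), one gets $\esp{}{B_n^2}=\sigma^2\,\esp{}{\sum_i W_i(\X)^2}\le\sigma^2 c_2/(c_1k_n)\to0$ because $k_n\to\infty$.

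It then remains to show $A_n\to0$ in $L^2$; since $\eta$ is bounded, $|A_n|\le 2\norm{\eta}_\infty$ and, by dominated convergence, it is enough to prove $A_n\to0$ in probability. Bounding each weight by $c_2/(c_1 N_n)$, with $N_n:=\#\{i:d(\X,\X_i)\le h_n(\X)\}\ge k_n$, gives
\[
|A_n|\le\frac{c_2}{c_1}\,\frac{1}{N_n}\sum_{i:\,d(\X,\X_i)\le h_n(\X)}\abs{\eta(\X_i)-\eta(\X)} ,
\]
so, up to the constant $c_2/c_1$, $|A_n|$ is the empirical conditional mean of $\abs{\eta(\cdot)-\eta(\X)}$ over the (at least $k_n$) sample points that fall in the shrinking ball $\mathcal{B}(\X,h_n(\X))$. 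This is precisely the quantity shown to vanish inside the proof of Proposition \ref{consistencia_kernel}, and I would mimic that argument; the only genuinely new ingredient is that the radius is now random, which I would handle using $H_n(\X)\to0$ (Lemma \ref{lema:d(Xk,X)to0}) together with two facts: first, by monotonicity of $\delta\mapsto\int_{\mathcal{B}(\X,\delta)}\abs{\eta(z)-\eta(\X)}\,d\mu(z)$ one may pass from the random radius to a slightly larger deterministic one, chosen as in Lemma \ref{existehn} to have $\mu$-mass of order $k_n/n$; second, a Chernoff bound for $\#\{i:\X_i\in\mathcal{B}(\X,\delta)\}\sim\mathrm{Bin}(n,\mu(\mathcal{B}(\X,\delta)))$ keeps the normalising count $N_n$ at its correct order $k_n$. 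The resulting upper bound then has the form of a Besicovitch average along a deterministic null sequence of radii, and tends to $0$ by condition \textit{F3} and bounded convergence.

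A convenient device for this last step, at least in the pure $k$-nearest neighbour case $h_n(\xf)=H_n(\xf)$, is the Stone-type identity
\[
\esp{}{\frac{1}{k_n}\sum_{j=1}^{k_n}\abs{\eta(\X_{(j)})-\eta(\X)}\;\Big|\;\X}=\frac{n}{k_n}\int\abs{\eta(z)-\eta(\X)}\,\mathbb{P}\big(\X_1\ \text{is a }k_n\text{-NN of }\X\mid\X_1=z\big)\,d\mu(z),
\]
$\X_{(1)},\dots,\X_{(k_n)}$ being the $k_n$ nearest neighbours of $\X$: its mixing measure, $(n/k_n)\,\mathbb{P}(\X_1\ \text{is a }k_n\text{-NN of }\X\mid\X_1=\cdot)\,d\mu$, is a sub-probability measure essentially concentrated on a ball about $\X$ of $\mu$-mass $\asymp k_n/n$, which reduces the claim to the Besicovitch property at a deterministic null radius. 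The step I expect to be the main obstacle is exactly this transfer: the Besicovitch condition is available in the paper only along deterministic radii, and only in probability, whereas here it must be invoked at the data-dependent radius $H_n(\X)$ while the normalising count is held at order $k_n$; the non-monotonicity of the Besicovitch average in the radius is what makes the sandwiching delicate. Everything else --- the noise term, the reduction to $A_n\to0$, the elementary weight bounds --- is routine, and, crucially, at no point is the extra $\log n$ room of Proposition \ref{consistencia_kernel} needed, because for the $k$-NN bandwidth the number of points in the smoothing ball exceeds $k_n$ deterministically rather than only with high probability.
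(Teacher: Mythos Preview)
Your variance piece $B_n$ is correct and coincides with what the paper uses: regularity of $K$ plus $h_n(\xf)\ge H_n(\xf)$ gives $\max_i W_i(\xf)\le c_2/(c_1k_n)\to0$, which is exactly the paper's verification of Stone's condition (ii).

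The gap is the bias term $A_n$, and you identify it yourself: you never actually carry out the transfer from the empirical ball-average $\tfrac{1}{N_n}\sum_{i:\X_i\in\mathcal{B}(\X,h_n(\X))}|\eta(\X_i)-\eta(\X)|$ to a Besicovitch average at a deterministic radius. The two devices you list---monotonicity of the \emph{numerator} $\delta\mapsto\int_{\mathcal{B}(\X,\delta)}|\eta-\eta(\X)|\,d\mu$ together with a Chernoff bound on $N_n$, or a Stone-type identity for the pure $k$-NN weights---do not obviously combine into a proof: the Besicovitch \emph{ratio} is not monotone in $\delta$, so enlarging the radius to a deterministic one can make the bound worse, and your Stone identity is only written for the pure case $h_n=H_n$, not for general $h_n\ge H_n$. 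As written, the bias step is a plan with an acknowledged obstacle rather than a proof.

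The paper avoids this obstacle by a different route. Instead of a direct bias--variance split, it invokes a Stone-type consistency theorem (Theorem~\ref{stone_version} and Corollary~\ref{cor:2stone}): one bounds $W_{ni}(\xf)\le(c_2/c_1)\,U_{ni}(\xf)$ with $U_{ni}(\xf)=\ind_{\{d(\X_i,\xf)\le h_n(\xf)\}}/\sum_j\ind_{\{d(\X_j,\xf)\le h_n(\xf)\}}$ and verifies the three Stone conditions for $U_{ni}$. Condition (ii) is your $B_n$ argument. For condition (iii) the key tool you are missing is Lemma~A.7 of \cite{forzani_fraiman_llop_2012}, which gives the deterministic inequality
\[
\esp{\Dn}{\sum_{i=1}^n U_{ni}(\xf)\,f(\X_i)}\le 12\,\frac{1}{\mu(\mathcal{B}(\xf,h_n(\xf)))}\int_{\mathcal{B}(\xf,h_n(\xf))}f\,d\mu
\]
for nonnegative $f$; this is exactly the empirical-to-Besicovitch transfer you were looking for, and it holds without any Chernoff step. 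Combined with the approximation of $\eta$ by a continuous bounded $\eta^*$ built into condition (iii), the problem reduces to three pieces $f_{1,n},f_{2,n},f_{3,n}$; the only one still involving a Besicovitch average at the random radius is $f_{3,n}$, and the paper handles that by the elementary split $\mathbb{P}(h_n(\X)>\delta_0)+\sup_{\delta\le\delta_0}\mathbb{P}(B_\delta(\X)>\lambda)$, using $h_n(\X)\to0$ a.s.\ and \textit{F3}. So the two ingredients that close your gap are Lemma~A.7 and the continuous-approximation trick embedded in Stone's condition (iii).
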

%
\section{\textcolor{black}{Consistency results for discretely sampled curves}}
\label{general}

In this section we will assume that we are not able to observe the whole trajectories  $\X_i$ in $\Hi$ \textcolor{black}{given in $F2$}, but only a function of them. As we will see in Section \ref{particulares}, different choices of that function will correspond to discretizations, eigenfunction expansions, or smoothing. In this context, the weights of the estimator given in (\ref{pesos_kernel}) cannot
be computed because we have not a distance $d$ defined for the discretized sample curves (as a consequence, we do not have the validity of the Besicovitch condition (\ref{ec:besicovitch}) for the discretized 
data) or a bandwidth $h_n$. 

 We are interested in defining an estimator and proving its consistency in this setting. For that, let us consider the following assumptions:

{\it \begin{enumerate}
\item[H1)]\label{H1} \textcolor{black}{$(\Hi,d)$ is a separable (metric) Hilbert space} and $F: \Hi \to \textcolor{black}{\Hi}$ is a function such that, \textcolor{black}{for each $i=1,\ldots,n,$ $F(\X_i) = \X_i^p$;}
\item[H2)]\label{H2} $d_p : \Hi \times \Hi \to \real$ is a pseudometric \textcolor{black}{in $\Hi$} defined by $d_p(\X, \Y) =
d(\X^p,\Y^p)$ such that there exists a sequence $c_{n,p} \to 0$ as $n,p\to \infty$ satisfying, \textcolor{black}{for each $i=1,\ldots,n$,}	
\begin{equation}\label{desigualdad-metrica2}
n^2\esp{\X}{\mathbb{P}^2_{\X_i|\X}{\big(\abs{d(\X,\X_i)-d_p(\X, \X_i)} \ge c_{n,p} \Big|\X \in
\sop{\mu}}\big)} \to 0.
\end{equation}
\end{enumerate}
Here, $\mathbb{P}^2_{\Y|\X}(\cdot)$ means the square of $\mathbb{P}_{\Y|\X}(\cdot)$.}

\begin{remark}
Observe that in $H1$ neither $\Hi$ nor $F$ change with the sample. This implies that in this case, the functional data falls into the category of sparsely and regularly sampled data.
\end{remark}

The estimator of $\eta$ based on $\{(\X_i^p,Y_i)\}_{i=1}^n$ will be defined as in (\ref{estimador_general}) and (\ref{pesos_kernel}) but with the pseudometric $d_p$ instead of the metric $d$. More precisely, for $h_{n,p}(\X)>0$, we define
\begin{equation}\label{estimador-nuevo}
\etahat_{n,p}(\X) = \frac{\sum_{i=1}^n K\pa{\frac{d_p(\X,\X_i)}{h_{n,p}(\X)}}Y_i}{\sum_{j=1}^n K\pa{\frac{d_p(\X,\X_j)}{h_{n,p}(\X)}}}.
\end{equation}
For this estimator, we state the following two asymptotic results.
\begin{thm}\label{teo3}
Assume K1, F2, F3, H1 and H2 hold.
\begin{enumerate}[(a)] 
\item\label{kernel2} (Kernel estimator) For any $\xf \in \sop{\mu}$, let $h^*_n(\xf)\to 0$ be a sequence of positive real numbers such that $\frac{n \mu(\mathcal{B}(\xf,h^*_n(\xf))}{\log n} \to \infty$. Then, for  $c_{n,p}$ given in H2 and
 $h_{n,p}(\xf) \to 0$ such that there exists a sequence $h_n(\xf)\to 0$, $h_n(\xf) \ge
h^*_n(\xf)$ satisfying:
\begin{enumerate}
\item[(H3.1)]\label{H31} $\esp{\X}{c^2_{n,p}/h^2_n(\X)} \to 0$ as $n,p\to \infty$;
\item[(H3.2)]\label{H32} $c_{n,p} \le h_{n,p}(\xf) - h_n(\xf) \le C_2 c_{n,p}$ for $C_2 \ge  1$;
\end{enumerate}
we have
\begin{equation}\label{dos}
\lim_{n,p \to \infty} \esp{}{(\etahat_{n,p}(\X) - \eta(\X))^2} =0.
\end{equation}
\item ($k_n$-NN with kernel estimator) Let $c_{n,p}$ given in H2 and $H_n(\xf)$ the distance from $\xf$ to its $k_n$-nearest neighbor  among $\{\X_1,\ldots, \X_n\}$. For any $\xf \in \sop{\mu}$, let $h_{n,p}(\xf) \to 0$ be such that there exists a sequence $h_n(\xf)\to 0$, $h_n(\xf) \ge H_n(\xf)$ satisfying assumptions (\textit{H3.1}) and
 (\textit{H3.2}). Then, for $k_n\to \infty$ and  $k_n/n \to  0$ we have (\ref{dos}).

\end{enumerate}
\end{thm}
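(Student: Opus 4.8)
\medskip

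\noindent\emph{Proof sketch.} The plan is to compare $\etahat_{n,p}(\X)$ with the estimator $\etahat_n(\X)$ obtained from (\ref{estimador_general})--(\ref{pesos_kernel}) using the \emph{true} metric $d$ and the bandwidth $h_n(\X)$ furnished by the hypotheses. Note first that $\esp{}{(\etahat_n(\X)-\eta(\X))^2}\to 0$: in part \ref{kernel2} this follows from Proposition \ref{consistencia_kernel} and Remark \ref{h-mayor} (since $h_n(\X)\ge h^*_n(\X)$ and $h_n(\X)\to 0$), and in part (b) from Proposition \ref{consistencia_knn-k} (since $h_n(\X)\ge H_n(\X)$ and $h_n(\X)\to 0$); hence, by the triangle inequality in $L^2$, it is enough to bound $\esp{}{(\etahat_{n,p}(\X)-\etahat_n(\X))^2}$. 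I would introduce the ``good'' event $A_{n,p}=\{\,\abs{d(\X,\X_i)-d_p(\X,\X_i)}<c_{n,p}\text{ for all }i=1,\dots,n\,\}$ and estimate, using H2, a union bound and the Cauchy--Schwarz inequality,
\[ \prob{}{A_{n,p}^c}\ \le\ n\,\esp{\X}{\mathbb{P}_{\X_i|\X}\!\big(\abs{d(\X,\X_i)-d_p(\X,\X_i)}\ge c_{n,p}\mid\X\in\sop{\mu}\big)}\ \le\ \Big(n^2\,\esp{\X}{\mathbb{P}^2_{\X_i|\X}(\cdots)}\Big)^{1/2}\ \longrightarrow\ 0 . \]
Off $A_{n,p}$ both $\etahat_{n,p}(\X)$ and $\etahat_n(\X)$ are convex combinations of the $Y_i$'s and $\eta$ is bounded (F3), so the conditional second moment of $(\etahat_{n,p}(\X)-\etahat_n(\X))^2$ given $(\X,\X_1,\dots,\X_n)$ is at most $4(\norm{\eta}_\infty^2+\sigma^2)$, whence the contribution of $A_{n,p}^c$ is $O(\prob{}{A_{n,p}^c})\to 0$ and only the analysis on $A_{n,p}$ remains.

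On $A_{n,p}$, H3.2 gives $h_n(\X)+c_{n,p}\le h_{n,p}(\X)$, so every $\X_i$ with $d(\X,\X_i)\le h_n(\X)$ also has $d_p(\X,\X_i)\le h_{n,p}(\X)$; thus the ``core'' set $I:=\{i:d(\X,\X_i)\le h_n(\X)\}$ is contained in $N_p:=\{i:d_p(\X,\X_i)\le h_{n,p}(\X)\}$, and the ``shell'' points of $N_p$ all satisfy $h_n(\X)<d(\X,\X_i)\le h_{n,p}(\X)+c_{n,p}\le\rho_n(\X):=h_n(\X)+(C_2+1)c_{n,p}$, so every point entering $\etahat_{n,p}(\X)$ lies within distance $\rho_n(\X)\to 0$ of $\X$. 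Writing $\etahat_{n,p}(\X)-\eta(\X)=\sum_{i\in N_p}W_{ni}^p(\X)(Y_i-\eta(\X))$, splitting $N_p$ into $I$ and the shell, and, on $I$, replacing $K(d_p(\X,\X_i)/h_{n,p}(\X))$ by $K(d(\X,\X_i)/h_n(\X))$, one rewrites this as a bounded multiple of $\etahat_n(\X)-\eta(\X)$ (which is $L^2$-negligible by the preceding paragraph) plus three correction terms. (i) A term built from the differences $K(d_p(\X,\X_i)/h_{n,p}(\X))-K(d(\X,\X_i)/h_n(\X))$ over the common core; by K1 (Lipschitz) and H3.2 these are at most $L(1+C_2)\,c_{n,p}/h_n(\X)$, so together with the lower bound $\sum_{j\in N_p}K(d_p(\X,\X_j)/h_{n,p}(\X))\ge c_1\,\#I$ (regularity of $K$) and the elementary bound $\esp{}{(\#I)^{-1}\sum_{i\in I}(Y_i-\eta(\X))^2}\le\sigma^2+4\norm{\eta}_\infty^2$, this term is $O\big(\esp{\X}{c_{n,p}^2/h_n^2(\X)}\big)\to 0$ in $L^2$ by H3.1. (ii) A ``variance'' term $\sum_{i\in N_p\setminus I}W_{ni}^p(\X)\,e_i$ over the shell; since $W_{ni}^p(\X)\le c_2/(c_1\,\#N_p)$ with $\#N_p\ge\#I$ by the regularity of $K$, and $\#I\to\infty$ a.s. --- because $n\mu(\mathcal{B}(\X,h_n(\X)))/\log n\to\infty$ in part \ref{kernel2}, while $\#I\ge\#\{i:d(\X,\X_i)\le H_n(\X)\}\ge k_n\to\infty$ in part (b) --- we get $\max_i W_{ni}^p(\X)\to 0$ and hence, the $e_i$ being centred and conditionally uncorrelated with conditional second moment $\sigma^2$, $\esp{}{(\sum_{i\in N_p\setminus I}W_{ni}^p(\X)e_i)^2}=\sigma^2\,\esp{}{\sum_{i\in N_p\setminus I}(W_{ni}^p(\X))^2}\le\sigma^2\,\esp{}{\max_i W_{ni}^p(\X)}\to 0$ by bounded convergence. (iii) A ``bias'' term $R_n:=\sum_{i\in N_p\setminus I}W_{ni}^p(\X)(\eta(\X_i)-\eta(\X))$ over the shell.

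I expect $R_n$ to be the genuine obstacle. Since $\abs{R_n}\le 2\norm{\eta}_\infty$ it suffices to show $\esp{}{\abs{R_n}}\to 0$, and the regularity of $K$ gives $\abs{R_n}\le(c_2/c_1)\,(\#N_p)^{-1}\sum_{i:\,d(\X,\X_i)\le\rho_n(\X)}\abs{\eta(\X_i)-\eta(\X)}$. The difficulty is that the thin shell $\mathcal{B}(\X,\rho_n(\X))\setminus\mathcal{B}(\X,h_n(\X))$ need not contain a smaller fraction of the sample than the inner ball, so one cannot replace $\abs{\eta(\X_i)-\eta(\X)}$ by $2\norm{\eta}_\infty$; instead the plan is to read the inner sum as the empirical version of $n\int_{\mathcal{B}(\X,\rho_n(\X))}\abs{\eta(z)-\eta(\X)}\,d\mu(z)$ and to exploit the Besicovitch condition (\ref{ec:besicovitch}) along the shrinking balls $\mathcal{B}(\X,\rho_n(\X))$ (legitimate since $\rho_n(\X)\to 0$), in combination with the lower bounds on $\#N_p$ already used and with H3.1, which forces $c_{n,p}/h_n(\X)\to 0$ in $L^2(\mu)$ and so keeps $\rho_n(\X)$ of the same order as $h_n(\X)$; controlling the share of $\mu\big(\mathcal{B}(\X,\rho_n(\X))\big)$ that the shell carries is the crux of this step, and is where I expect the most work to be needed. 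Once $\esp{}{\abs{R_n}}\to 0$ is established, the passage to $L^2$ is immediate because $\abs{R_n}$ is bounded, and collecting the four pieces yields (\ref{dos}) in part \ref{kernel2}. Part (b) is obtained by repeating the argument, with $H_n(\xf)$ playing the role of $h^*_n(\xf)$ --- its convergence to $0$, which makes the choice $h_n(\xf)\to 0$ with $h_n(\xf)\ge H_n(\xf)$ possible, being Lemma \ref{lema:d(Xk,X)to0} --- and with Proposition \ref{consistencia_knn-k} supplying $\esp{}{(\etahat_n(\X)-\eta(\X))^2}\to 0$.
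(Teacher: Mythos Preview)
Your overall architecture is sound and essentially parallel to the paper's (the paper organizes the comparison of $\etahat_{n,p}$ with $\etahat_n$ via four index sets $A_1,\dots,A_4$ according to whether $d\lessgtr h_n$ and $d_p\lessgtr h_{n,p}$ rather than a good event and a core/shell split, but the content is the same): reduce to $\etahat_n$ via Propositions~\ref{consistencia_kernel}/\ref{consistencia_knn-k}, kill the bad-event contribution with H2, and on the good event exploit the Lipschitz property of $K$ together with H3.1--H3.2.

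The genuine gap is your treatment of the shell bias $R_n$. The Besicovitch route you sketch would require controlling $\mu\big(\mathcal{B}(\X,\rho_n(\X))\big)\big/\mu\big(\mathcal{B}(\X,h_n(\X))\big)$ (equivalently, the shell's share of the enlarged ball), and nothing in F1--F3, H1--H2 bounds this ratio: in an infinite-dimensional space with no doubling hypothesis it can blow up even though $\rho_n/h_n\to 1$, so the crux you flag is a real obstruction, not just ``more work''. The paper sidesteps the issue entirely by pushing the Lipschitz argument you already used in (i) onto the shell. For $i\in N_p\setminus I$ one has $d(\X,\X_i)/h_n(\X)>1$, hence $K_i=0$; therefore on $A_{n,p}$,
\[
K_{i,p}\;=\;|K_{i,p}-K_i|\ \le\ L\,\Big|\frac{d_p(\X,\X_i)}{h_{n,p}(\X)}-\frac{d(\X,\X_i)}{h_n(\X)}\Big|\ \lesssim\ \frac{c_{n,p}}{h_n(\X)},
\]
by exactly the computation you did for (i). Combined with $\sum_j K_{j,p}\ge c_1\,\#N_p\ge c_1\,\#(N_p\setminus I)$ this gives $\sum_{i\in N_p\setminus I}W_{ni}^p(\X)\lesssim c_{n,p}/h_n(\X)$, and hence $|R_n|\le 2\norm{\eta}_\infty\sum_{i\in N_p\setminus I}W_{ni}^p(\X)\lesssim c_{n,p}/h_n(\X)$, which is $o(1)$ in $L^2$ by H3.1. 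In short: the shell weights are not merely bounded by $c_2/(c_1\,\#N_p)$ as you used, but are each of order $(c_{n,p}/h_n(\X))/\#N_p$, and this is what makes the shell harmless without any appeal to Besicovitch or to the small-ball geometry of $\mu$. (The same observation incidentally disposes of your term (ii) directly as $O(c_{n,p}/h_n(\X))$ in $L^2$, without needing $\#I\to\infty$.)
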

\begin{remark}\label{existehnp}
Observe that the sequence $h^*_n(\xf)$ in Theorem \ref{teo3} always exists by
Lemma \ref{existehn}. In
addition, under \textit{H2}, it is always possible to choose a sequence $h_{n,p}(\xf) \to 0$ 
{fulfilling the conditions} in Theorem \ref{teo3}. Indeed, taking $h_n(\xf) = \max\{h_n^*(\xf), \sqrt{c_{n,p}}\}$ and $h_{n,p}(\xf) = h_n(\xf) + C c_{n,p}$,  with $C \ge 1$, we have that $h_n(\xf) \to 0$, 
$h_{n,p}(\xf) \to 0$, $h_n(\xf) \ge h_n^*(\xf)$,  (\textit{H3.1}) holds since $h_n(\xf) \ge \sqrt{c_{n,p}}$ and (\textit{H3.2}) holds by definition of $h_{n,p}(\xf)$.  The same happens if instead of taking $h^*_n(\xf)$ we take $H_n(\xf)$.
\end{remark}
\begin{thm}\label{teo4}
Under the assumptions of Theorem \ref{teo3}, let $\gamma_n \to \infty$ as $n\to
\infty$ be such that, as $n,p\to \infty$,
\begin{enumerate}[(a)]
 \item $\esp{\X}{\gamma_n \left(\frac{c_{n,p}}{h_n(\X)}\right)^2} \to 0$;
 \item $\gamma_n n^2\esp{\X}{\mathbb{P}^2_{\X_i|\X}{\big(\abs{d(\X,\X_i)-d_p(\X, \X_i)} \ge c_{n,p}
\Big|\X \in \sop{\mu}}\big)} \to 0$, for each $i=1,\ldots,n$.
\end{enumerate}
Then
\[
\lim_{n \to \infty}\esp{}{\gamma_n(\etahat_n(\X) - \eta(\X))^2} = 0,
\]
implies 
\[ 
\lim_{n,p \to \infty}  \esp{}{\gamma_n (\etahat_{n,p}(\X) - \eta(\X))^2} = 0.
\]
\end{thm}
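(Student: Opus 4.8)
Theorem~\ref{teo4} is the ``with rate'' version of Theorem~\ref{teo3}, so the plan is to revisit the proof of the latter and carry the factor $\gamma_n$ through every estimate, replacing the step ``$\esp{}{(\etahat_n(\X)-\eta(\X))^2}\to 0$ by Proposition~\ref{consistencia_kernel} (resp.\ Proposition~\ref{consistencia_knn-k})'' by the present hypothesis ``$\esp{}{\gamma_n(\etahat_n(\X)-\eta(\X))^2}\to 0$''. Concretely I would start from
\begin{equation*}
\gamma_n\pa{\etahat_{n,p}(\X)-\eta(\X)}^2\ \le\ 2\gamma_n\pa{\etahat_{n,p}(\X)-\etahat_n(\X)}^2+2\gamma_n\pa{\etahat_n(\X)-\eta(\X)}^2 ,
\end{equation*}
whose last summand already has expectation tending to $0$ by assumption, so the whole matter reduces to controlling the $\gamma_n$-weighted discretization error $\esp{}{\gamma_n(\etahat_{n,p}(\X)-\etahat_n(\X))^2}$.

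For this I would use the fact that the proof of Theorem~\ref{teo3} in fact yields a quantitative estimate of (roughly) the form
\begin{equation*}
\esp{}{(\etahat_{n,p}(\X)-\etahat_n(\X))^2}\ \le\ C\,\esp{\X}{\pa{\tfrac{c_{n,p}}{h_n(\X)}}^2}\ +\ C\,n^2\,\esp{\X}{\mathbb{P}^2_{\X_i|\X}(\cdots)}\ +\ (\text{lower order}) ,
\end{equation*}
obtained by conditioning on $\X$ and splitting along the good event $G_{n,p}=\bigcap_{i=1}^n\{\abs{d(\X,\X_i)-d_p(\X,\X_i)}<c_{n,p}\}$: on $G_{n,p}$, assumption (\textit{H3.2}) aligns the supports of the kernels $K(d(\X,\X_i)/h_n(\X))$ and $K(d_p(\X,\X_i)/h_{n,p}(\X))$ up to a slack of order $c_{n,p}$ while K1 makes the corresponding weights differ by $O(c_{n,p}/h_n(\X))$, which produces the first term; on $G_{n,p}^c$ one uses the crude bound $(\etahat_{n,p}-\etahat_n)^2\le 2\sum_i(W_{ni}^p(\X)+W_{ni}(\X))Y_i^2$, the moment control $\esp{}{Y_i^2\mid\X_i}\le 2\norm{\eta}_\infty^2+2\sigma^2$, the union bound $\prob{}{G_{n,p}^c\mid\X}\le\sum_{i=1}^n\prob{\X_i|\X}{\abs{d(\X,\X_i)-d_p(\X,\X_i)}\ge c_{n,p}}$ and a Cauchy--Schwarz step in the $\X$-integration, which produces the second. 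Carrying $\gamma_n$ through this argument replaces the right-hand side by $C\,\esp{\X}{\gamma_n(c_{n,p}/h_n(\X))^2}+C\,\gamma_n n^2\esp{\X}{\mathbb{P}^2_{\X_i|\X}(\cdots)}+(\text{lower order})$, which is $o(1)$ by assumptions (a) and (b); together with the splitting above this gives $\esp{}{\gamma_n(\etahat_{n,p}(\X)-\eta(\X))^2}\to 0$.

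The only genuinely delicate point should be the $\gamma_n$-bookkeeping: one has to check that each term inherited from the proof of Theorem~\ref{teo3} is either linear in $\esp{}{(\etahat_n-\eta)^2}$ — which it is, because the comparison of $\etahat_{n,p}$ with $\etahat_n$ on $G_{n,p}$ is governed by the \emph{deterministic} gauge $c_{n,p}/h_n(\X)$ rather than being coupled to the statistical error — or, if a Cauchy--Schwarz step does produce a product of a ``discretization'' factor and a ``statistical'' one, that one may write $\gamma_n=\sqrt{\gamma_n}\cdot\sqrt{\gamma_n}$ and assign one half-power to each, the first absorbed by (a)/(b) and the second by the rate hypothesis; confirming that no term is genuinely quadratic in $\gamma_n$ is what requires care. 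Finally, observe that here the kernel and $k_n$-NN cases of Theorem~\ref{teo3} need not be separated: since full-model convergence at rate $\gamma_n$ is now a hypothesis, neither Lemma~\ref{lema:d(Xk,X)to0} nor the condition $n\mu(\mathcal B(\xf,\cdot))/\log n\to\infty$ is invoked, and both cases reduce to the single discretization comparison above, the sole difference being whether $h_n(\xf)\ge h^*_n(\xf)$ or $h_n(\xf)\ge H_n(\xf)$.
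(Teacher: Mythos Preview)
Your proposal is correct and follows essentially the same approach as the paper: the paper's own proof of Theorem~\ref{teo4} is in fact even terser than your sketch, amounting to ``from the proof of Theorem~\ref{teo3} we get $\esp{}{\gamma_n(\etahat_{n,p}(\X)-\eta(\X))^2}\lesssim \gamma_n n\,\esp{\X}{\prob{\X_1|\X}{\cdots}}+\esp{\X}{\gamma_n(c_{n,p}/h_n(\X))^2}+\esp{}{\gamma_n(\etahat_n(\X)-\eta(\X))^2}$, hence the conclusion''. Your good-event splitting along $G_{n,p}$ is a slight reorganization of the paper's index-wise $A_1$--$A_4$ case analysis in Theorem~\ref{teo3}, but the resulting bound and its use are identical; the contingency about splitting $\gamma_n=\sqrt{\gamma_n}\cdot\sqrt{\gamma_n}$ turns out to be unnecessary since the decomposition is purely additive, and your observation that the kernel and $k_n$-NN cases merge here is exactly right.
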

\section{Particular cases}\label{particulares}

In this section we provide definitions of $\Hi_p$ and $d_p$ for discretization, smoothing, and eigenfunction expansions, which satisfy conditions  \textit{H1} and \textit{H2}. Then, for any sequence $h_{n,p}(\xf)\to 0$ satisfying (\textit{H3.1}) and (\textit{H3.2}) in Theorem \ref{teo3}, we get the consistency of $\hat\eta_{n,p}$ as a consequence of the consistency results for $\hat\eta_n$ in the full model.

\

Consider the case where the elements of the dataset are curves in $L^2([0,1])$  that are only observed at a discrete set of points in the interval $[0,1]$.  More precisely, let us assume that $\{\X_i\}_{i=1}^n$ are observed only at some points: $
(\X_i(t_1), \ldots, \X_i(t_{p+1}))$
where  $0=t_1 <t_2 \le \ldots < t_{p+1} = 1$, which for simplicity we will assume
are equally spaced, i.e., $\Delta t = t_{i+1} - t_i = 1/p$. In this case, we will need to require the trajectories to satisfy some regularity condition. More precisely, we will assume that $\X$ is a random element of $\Hi \doteq H^1([0,1])$,
the Sobolev space defined as
\[
H^1([0,1]) = \{f:[0,1] \to \real: f \text{ and } Df \in L^2([0,1])\},
\]
where $Df$ is the weak derivative of $f$, i.e., $Df$ is a function in $L^2([0,1])$
which satisfies
\[
\int_0^1 f(t) D\phi(t)\, dt = - \int_0^1 Df(t) \phi(t)\, dt,  \hspace{1cm} \forall \, \phi
\in C_0^{\infty}.
\]
In this space, the norm is defined by
\[
\norm{f}_{H^1([0,1])} = \norm{f}_{L^2([0,1])} + \norm{Df}_{L^2([0,1])}.
\]
In this setting, we will prove consistency for the pseudometrics $d_p$ given below.


\subsection{\rm \textit{Discretization}}\label{discretizacion}

Consider the pseudometric
\[d_p(\X,\X_1) = d(\X^p,\X_1^p)= \pa{\frac{1}{p} \sum_{j=1}^p\abs{\X(t_j) - \X_1(t_j)}^2}^{1/2} ,\]
where $\X^p(t) = F(\X)(t) = \sum_{j=1}^p \phi_j(t) \X(t_j)$ with $\phi_j(t) = \ind_{[t_j,t_{j+1})}(t)$. In this case, consistency will hold for any sequence $c_{n,p}\to 0$ as $n,p \to \infty$ such that $n^2\prob{\X,\X_1}{\norm{\X}_{\Hi} +  \norm{\X_1}_{\Hi} \ge p c_{n,p}} \to 0$. 
 
%

%
\subsection{\rm \textit{Kernel Smoothing}}\label{kernel}

Let us consider now the pseudometric
\[d_p(\X,\X_1) = d(\X^p,\X_1^p) = \pa{\int_0^1 \abs{\X^p(t)-\X_1^p(t)}^2 \, dt}^{1/2},\]
where $\X^p(t) = F(\X)(t) = \sum_{j=1}^p \phi_j(t) \X(t_j)$ with $\phi_j(t) = \frac{K(|t-t_j|/h)}{\sum_{i=1}^p K(|t-t_i|/h)}$ and $K$ is a regular kernel supported in $[0,1]$. In this case, consistency will be true for any sequence $c_{n,p}\to 0$ as $n,p \to \infty$ satisfying $n^2\prob{\X,\X_1}{\norm{\X}_{\Hi} +  \norm{\X_1}_{\Hi} \ge p c_{n,p}} \to 0$.

Let us note that if $\mathbb E_{\X}(\norm{\X}^2_{\Hi})<\infty$, the consistency for the cases given in Sections \ref{discretizacion} and \ref{kernel} will hold for any sequence $c_{n,p} $ such that $ \frac{ n}{p c_{n,p}} \rightarrow 0$.

\subsection{\rm \textit{Eigenfunction expansions}}\label{eigen}

 Let $\X, \X_1$ be i.d. random elements on $\Hi=L^2[0,1]$. Let $v_1, v_2,  \dots$ be the orthonormal
eigenfunctions of the covariance operator $\esp{\X}{\X(t)\X(s)}$ (without loss of generality we 
have
assumed that $\esp{}{\X(t)} = 0$) associated with the eigenvalues $\lambda_1 \ge \lambda_2\ge
\ldots$ such that
\[
\esp{\X}{\X(t)\X(s)} = \sum_{k=1}^\infty \lambda_k v_k(t) v_k(s).
\]
If $\esp{}{\int \X^2(s) \, ds} < \infty$ is finite, using the Karhunen--Lo\`eve representation,
we can write $\X$ as
\begin{equation}\label{X}
 \X(t) = \sum_{k=1}^\infty \pa{\int \X(s) v_k(s) \, ds} v_k(t) \doteq \sum_{k=1}^\infty \xi_k
v_k(t),
\end{equation}
with $\esp{}{\xi_k} = 0$, $\esp{}{\xi_k \xi_j} = 0$ (i.e., $\xi_1, \xi_2, \ldots$
uncorrelated) and $\var{}{\xi_k} = \esp{}{\xi_k^2} = \lambda_k =  \esp{}{\pa{\int \X(s) v_k(s) \, ds}^2} $. The classical $L^2$-norm in $\Hi$
can be written as
\begin{equation}\label{dL2}
d(\X,\X_1) =  \sqrt{\sum_{k=1}^{\infty} \pa{\int (\X(t) - \X_1(t)) v_k(t) \,dt}^2}.
\end{equation}
If we consider the truncated expansion of $\X$ as given in \cite{Ferraty_and_vieu_2006},
\begin{equation}\label{X-ferraty}
\X^p(t) = \sum_{k=1}^p \pa{\int \X(s) v_k(s) \, ds} v_k(t),
\end{equation}
we can define the parametrized class of seminorms from the classical $L^2$-norm given by
\[
\norm{\X}_p = \sqrt{\int (\X^p(t))^2 \, dt} = \sqrt{\sum_{k=1}^p \pa{\int \X(t) v_k(t) \,
dt}^2},
\]
which leads to the pseudometric
\begin{equation}\label{d-ferraty}
d_p(\X, \X_1) = d(\X^p, \X_1^p) = \sqrt{\sum_{k=1}^p \pa{\int (\X(t) - \X_1(t)) v_k(t) \,dt}^2}.
\end{equation}
In this case, the consistency will hold for any sequence $c_{n,p}\to 0$ such that $\frac{n^2}{c^2_{n,p}} \sum_{k=p+1}^{\infty}\lambda_k \to 0$ as $n,p \to \infty$.


\appendix

\section{Proofs of auxiliary results}\label{auxiliares}

To prove the consistency of the examples given in sections \ref{discretizacion} and \ref{kernel} we need the following result.
\begin{proposition}\label{lema2}
	Let $\X^p(t) = \sum_{j=1}^p \phi_j(t) \X(t_j)$ with $\phi_j$ satisfying:
	\begin{enumerate}[(a)]
		\item \label{1} for each $t \in [0,1]$, $\sum_{j=1}^p \phi_j(t) =1$;
		\item \label{3} for each $t \in [0,1]$, $\sum_{j=i}^p \phi_j^2(t) \le C_3$ for some constant $C_3$;
		\item \label{2} $\sop{\phi_j} \subset [t_{(j-m)}, t_{(j+m)}]$ with $m$ independent of $p$.
	\end{enumerate}
	If $c_{n,p}\to 0$ as $n,p \to \infty$ is such that $n^2\prob{\X,\X_1}{\norm{\X}_{\Hi} +  \norm{\X_1}_{\Hi} \ge p c_{n,p}} \to 0$, \textit{H2} is fulfilled. 
\end{proposition}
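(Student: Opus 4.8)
The plan is to split the statement into a deterministic $L^2$-approximation estimate in $H^1([0,1])$ and a short probabilistic argument. The first --- and essentially only --- task is to establish the following lemma: under (a)--(c) there is a constant $C=C(m,C_3)$, \emph{independent of $p$}, such that
\[
\norm{f-f^p}_{L^2([0,1])}\;\le\;\frac{C}{p}\,\norm{f}_{H^1([0,1])}\qquad\text{for every }f\in H^1([0,1]),
\]
where $f^p(t)=\sum_{j=1}^p\phi_j(t)f(t_j)$. Point evaluations are legitimate because $H^1([0,1])$ embeds continuously in $C([0,1])$, and the absolutely continuous representative of $f$ satisfies $f(t)-f(t_j)=\int_{t_j}^{t}Df(s)\,ds$. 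To prove the lemma I would use (a) to write $f(t)-f^p(t)=\sum_j\phi_j(t)\bigl(f(t)-f(t_j)\bigr)$; use (c) to note that for each fixed $t$ only $O(m)$ of the $\phi_j(t)$ are nonzero and that each such $t_j$ lies within $m/p$ of $t$; apply the Cauchy--Schwarz inequality in $j$, where (b) bounds $\sum_j\phi_j^2(t)$ by $C_3$; estimate each increment by $\bigl(f(t)-f(t_j)\bigr)^2\le|t-t_j|\int_{|s-t|\le 2m/p}|Df(s)|^2\,ds$; and finally integrate over $t\in[0,1]$ and apply Tonelli's theorem to obtain $\norm{f-f^p}_{L^2}^2=O(p^{-2})\,\norm{Df}_{L^2}^2\le O(p^{-2})\,\norm{f}_{H^1}^2$. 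Conditions (b) and (c) are used precisely to keep the number of overlapping functions and the lengths of the relevant windows bounded by constants, which is what produces the rate $O(p^{-1})$ rather than a mere $o(1)$; I expect this bookkeeping to be the one genuinely technical point.

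With the lemma available, I would transport the bound to the pseudometric via the reverse triangle inequality. In the two examples this proposition serves (Sections \ref{discretizacion} and \ref{kernel}), $d_p$ is built from the $L^2$-metric on the reconstructions, so $d(\X,\X_1)=\norm{\X-\X_1}_{L^2([0,1])}$, $d_p(\X,\X_1)=\norm{\X^p-\X_1^p}_{L^2([0,1])}$, and $F$ is linear, hence $\X^p-\X_1^p=(\X-\X_1)^p$ and
\[
\bigl|d(\X,\X_1)-d_p(\X,\X_1)\bigr|\;\le\;\norm{\X-\X^p}_{L^2([0,1])}+\norm{\X_1-\X_1^p}_{L^2([0,1])}\;\le\;\frac{C}{p}\bigl(\norm{\X}_{\Hi}+\norm{\X_1}_{\Hi}\bigr),
\]
and the same bound holds with $\X_1$ replaced by any $\X_i$, which is identically distributed.

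Finally, given the sequence $c_{n,p}$ from the hypothesis, I would verify \textit{H2} with the sequence $\tilde c_{n,p}:=C\,c_{n,p}$, which still tends to $0$. By the displayed bound, $\{\,|d(\X,\X_i)-d_p(\X,\X_i)|\ge\tilde c_{n,p}\,\}\subseteq\{\,\norm{\X}_{\Hi}+\norm{\X_i}_{\Hi}\ge p\,c_{n,p}\,\}$, and the conditioning on $\{\X\in\sop\mu\}$ is vacuous since $\mu$ is the law of $\X$. Because probabilities are at most $1$, $\mathbb P^2_{\X_i|\X}(A)\le\mathbb P_{\X_i|\X}(A)$ for any event $A$; combining this with the event inclusion and taking $\esp{\X}{\cdot}$ (tower rule) shows that the left-hand side of the display in \textit{H2}, written for $\tilde c_{n,p}$, is at most $n^2\,\prob{\X,\X_i}{\norm{\X}_{\Hi}+\norm{\X_i}_{\Hi}\ge p\,c_{n,p}}$, which tends to $0$ by assumption. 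This is exactly \textit{H2}; as noted, the approximation lemma is the only real obstacle, everything after it being routine.
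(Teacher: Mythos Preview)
Your proposal is correct and follows essentially the same route as the paper: you establish the deterministic approximation bound $d(\X^p,\X)\lesssim p^{-1}\norm{\X}_{\Hi}$ via conditions (a)--(c), the Fundamental Theorem of Calculus in $H^1$, and Cauchy--Schwarz, then pass to \textit{H2} by the (reverse) triangle inequality together with $\mathbb P^2\le\mathbb P$ and the tower rule. The only cosmetic difference is that you carry the constant explicitly through $\tilde c_{n,p}=C\,c_{n,p}$, whereas the paper absorbs it into $\lesssim$; otherwise the argument and its ingredients coincide.
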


\begin{proof}[Proof of Proposition \ref{lema2}]
	Using the Fundamental Theorem of Calculus (FTC) (see Theorem 8.2 in \cite{Brezis_2010}) for $H^1([0,1])$, we get

	\begin{align*}
	d^2(\X^p,\X) &= \int_0^1 \abs{\sum_{j=1}^p \X(t_j)\phi_j(t) - \X(t)}^2 \, dt \\
	&= \int_0^1 \abs{\sum_{j=1}^p (\X(t_j) - \X(t)) \phi_j(t)}^2 \, dt  &\pa{\text{by
			(\ref{1})}}\\ &= \int_0^1 \abs{\sum_{j=1}^p \pa{\int_{t_j}^t D\X(s)
			\, ds} \phi_j(t)}^2 \, dt &\pa{\text{from FTC}} \\&\le \int_0^1 \hspace{-0.2cm} \pa{\sum_{j=1}^p
		\pa{\int_{t_j}^t D\X(s) \, ds}^2 \hspace{-0.1cm}  \ind_{\{\sop{\phi_j}\}}(t)}\hspace{-0.1cm} \pa{ \sum_{j=1}^p \phi_j^2(t) } \, dt
	&\pa{\text{by C-S Ineq.}}
	\\&\lesssim \int_0^1 \sum_{j=1}^p  \pa{\int_{t_j}^t D\X(s) \, ds}^2 \ind_{\{\sop{\phi_j}\}}(t) \, dt &\pa{\text{by (\ref{3})}} \\&\lesssim \int_0^1\sum_{j=1}^p
	\pa{\int_{t_j}^t (D\X(s))^2 \, ds} |t-t_j| \ind_{\{\sop{\phi_j}\}}(t) \, dt & \pa{\text{by C-S
			Ineq.}}
	\\&= \sum_{i=1}^p \int_{t_i}^{t_{i+1}} \sum_{\stackrel{j=1}{\stackrel{j:|j-i|\le m}{}}}^p
	\pa{\int_{t_j}^t (D\X(s))^2 \, ds} |t-t_j| \, dt &\pa{\text{by (\ref{2})}} \\&\lesssim \sum_{i=1}^p \sum_{\stackrel{j=1}{\stackrel{j:|j-i|\le m}{}}}^p
	\int_{t_{i-m}}^{t_{i+m}} (D\X(s))^2 \pa{\int_{t_j}^{t_{j+1}} |t-t_j| \, dt} \, ds \\&\lesssim  \frac{m}{p^2}\sum_{i=1}^p \sum_{\stackrel{j=1}{\stackrel{j:|j-i|\le m}{}}}^p
	\int_{t_{i-m}}^{t_{i+m}} (D\X(s))^2 \, ds \\&\lesssim   \frac{m^ 2}{p^2}\sum_{i=1}^p
	\int_{t_{i-m}}^{t_{i+m}} (D\X(s))^2 \, ds \\&=  \frac{m^ 2}{p^2}\int_0^1\sum_{i=1}^p
	\ind_{[t_{i-m}, t_{i+m}]}(s) (D\X(s))^2 \, ds \lesssim  \frac{1}{p^2} \norm{\X}_{\Hi}^2,
	\end{align*}
	from where we get $d(\X^p,\X)\lesssim  \frac{1}{p}  \norm{\X}_{\Hi}$. Analogously we can prove that $d(\X_1^p,\X_1)\lesssim  \frac{1}{p}  \norm{\X_1}_{\Hi}$. By triangular inequality,
	\begin{align*}
	&\nonumber \hspace{-2cm} n^2\esp{\X}{\mathbb{P}^2_{\X_1|\X}\big(\abs{d(\X,\X_1)-d_p(\X, \X_1)} \ge c_{n,p} \Big|\X
		\in\sop{\mu}\big)} \\&\le n^2\prob{\X,\X_1}{\norm{\X}_{\Hi} +  \norm{\X_1}_{\Hi} \ge p c_{n,p}},
	\end{align*}
	and therefore, for any $c_{n,p}\to 0$ such that $n^2\prob{\X,\X_1}{\norm{\X}_{\Hi} +  \norm{\X_1}_{\Hi} \ge p c_{n,p}} \to 0$ \textit{H2} is fulfilled.
\end{proof}

\subsubsection{Consistency for the example in Section \ref{discretizacion}}
Since the functions $\phi_j(t) = \ind_{[t_j,t_{j+1})}(t)$ satisfy trivially conditions (\ref{1})--(\ref{2}) of Proposition \ref{lema2}, \textit{H2} is fulfilled and therefore, for any sequence $h_{n,p}(\xf)\to 0$ satisfying (\textit{H3.1}) and (\textit{H3.2}) in Theorem \ref{teo3}, we get the consistency of $\hat\eta_{n,p}$.

\subsubsection{Consistency for the example in Section \ref{kernel}}

Observe that $\phi_j(t) = \frac{K(|t-t_j|/h)}{\sum_{i=1}^p K(|t-t_i|/h)}$ satisfies conditions 
(\ref{1})--(\ref{2}) in Proposition \ref{lema2}:
\begin{enumerate}[(a)]
	\item  for each $t \in [0,1]$, $\sum_{j=1}^p \phi_j(t) = \sum_{j=1}^p
	\frac{K(|t-t_j|/h)}{\sum_{i=1}^p K(|t-t_i|/h)} = 1$;
	\item since $K$ is nonnegative and $\frac{K(|t-t_j|/h)}{\sum_{i=1}^p K(|t-t_i|/h)}\le 1$, for each $t \in [0,1]$, there exists $C_3=1$ such that,
	$$\sum_{j=1}^p \phi_j^2(t) =
	\sum_{j=1}^p \pa{\frac{K(|t-t_j|/h)}{\sum_{i=1}^p K(|t-t_i|/h)}}^2 \le \sum_{j=1}^p \frac{K(|t-t_j|/h)}{\sum_{i=1}^p K(|t-t_i|/h)} = 1;$$
	\item $\sop{\phi_j} = \sop{K(|t-t_j|/h)} = [t_j-h, t_j + h]$, which implies that, for $h \le m/p$,
	$\sop{\phi_j}  \subset [t_{(j-m)}, t_{(j+m)}]$.
\end{enumerate}
This implies that \textit{H2} is fulfilled then, for any sequence $h_{n,p}(\xf)\to 0$ satisfying (\textit{H3.1}) and (\textit{H3.2}) in Theorem \ref{teo3}, we get the consistency of $\hat\eta_{n,p}$.  

\subsubsection{Consistency for the example in Section \ref{eigen}}

Let us consider the truncated expansion of $\X$, $\X^p(t)$, given by (\ref{X-ferraty}) and
the pseudo-metric $d_p(\X,\X_1) = d(\X^p,\X_1^p)$ given by (\ref{d-ferraty}). In order to prove
\textit{H2}, let us consider $c_{n,p}$ such that $\frac{n^2}{c^2_{n,p}} \sum_{k=p+1}^{\infty}
\lambda_k \to 0$. Using Chebyshev's Inequality in (\ref{desigualdad-metrica2}) followed by Cauchy Schwartz, we get
\begin{align}\label{dd}
&\nonumber \hspace{-2cm}n^2\esp{\X}{\mathbb{P}^2_{\X_1|\X}\big(\abs{d(\X,\X_1)-d_p(\X, \X_1)} \ge c_{n,p} \Big|\X
	\in\sop{\mu}\big)} \\&\le \frac{n^2}{c^2_{n,p}}\mathbb{E}_{\X,\X_1}\big(\pa{d(\X,\X_1)-d_p(\X, \X_1)}^2\big).
\end{align}
Now, since $d(\X,\X_1) \ge d_p(\X,\X_1)$ we have that $0\le d(\X,\X_1) - d_p(\X,\X_1) = d(\X,\X_1) - d(\X^p,\X_1^p)$ and, by triangular inequality $d(\X,\X_1) \le  d(\X,\X^p) + d(\X^p,\X_1^p) + d(\X_1^p,\X_1)$ which implies that,
\begin{equation}\label{triangular1}
0 \le d(\X,\X_1) - d_p(\X,\X_1) \le d(\X,\X^p) + d(\X_1^p,\X_1),
\end{equation}
and taking squares,
\[
0 \le (d(\X,\X_1) - d_p(\X,\X_1))^2 \le (d(\X,\X^p) + d(\X_1^p,\X_1))^2 \le  2\pa{d^2(\X,\X^p) + d^2(\X_1^p,\X_1)}.
\]
As a consequence, to proof this proposition it will sufficient to bound $\esp{\X}{d^2(\X,\X^p)}$ (equivalently, $\esp{\X_1}{d^2(\X_1,\X_1^p)}$). Since $v_k$ are orthonormal, 
\begin{align*}
d^2(\X, \X^p) &= \int \left( \X(s) - \sum_{k=1}^p \pa{\int \X(t) v_k(t) \, dt} v_k(s)\right)^2 \,
ds \\ &= \sum_{k=p+1}^{\infty} \pa{\int \X(t) v_k(t) \, dt}^2\hspace{-0.1cm}.
\end{align*}
Then we have,
\begin{align*}
\esp{\X}{d^2(\X,\X^p)} &= \esp{\X}{\sum_{k=p+1}^{\infty} \pa{\int \X(t) v_k(t) \, dt}^2}
\\ &= \sum_{k=p+1}^{\infty}
\lambda_k. & \pa{\text{from } (\ref{X}) }
\end{align*}
Analogously we can prove that $\esp{\X_1}{d^2(\X_1,\X_1^p)} =\sum_{k=p+1}^{\infty}
\lambda_k$. Therefore, in (\ref{dd}) we get
\begin{align*}
& n^2\esp{\X}{\mathbb{P}^2_{\X_1|\X}\big(\abs{d(\X,\X_1)-d_p(\X, \X_1)} \ge c_{n,p} \Big|\X
	\in\sop{\mu}\big)}  \\&\hspace{2cm}\lesssim \frac{n^ 2}{c^2_{n,p}} \sum_{k=p+1}^{\infty} \lambda_k \to 0.
\end{align*}
This implies that \textit{H2} is fulfilled then, for any sequence $h_{n,p}(\xf)\to 0$ satisfying (\textit{H3.1}) and (\textit{H3.2}) in Theorem \ref{teo3}, we get the consistency of $\hat\eta_{n,p}$.  

\section{Proof of Proposition \ref{consistencia_knn-k} and Theorems \ref{teo3} and \ref{teo4}}\label{proofs}
Here (and hereafter) { we will use the notation} $f \lesssim g$ when there exists a
constant $C>0$ such that $f \le C g$ and $f \approx g$ if there exists a constant $C>0$ such that $f
= C g$. 
To prove Proposition \ref{consistencia_knn-k} we need some preliminary results whose proofs can be
found in \cite{forzani_fraiman_llop_2012}. 
\begin{thm}[Theorem 3.4]\label{stone_version}
	If $\eta \in L^2(\Hi,\mu)$ and $\etahat_n$ is the estimator given in (\ref{estimador_general}) with weights $W_n (\X) = \llave{W_{ni}(\X)}_{i=1}^n$ satisfying the following conditions:
	\begin{itemize}
		\item[(i)] \label{i}There is a sequence of nonnegative random va\-ria\-bles $a_n(\X) \to
		0$ a.s. such that
		\[
		\lim_{n\to \infty} \esp{}{\sum_{i=1}^n W_{ni}(\X) \ind_{\llave{d(\X,\X_i)>a_n(\X)}}} =0;
		\]
		\item[(ii)]\label{ii}
		\[
		\lim_{n\to \infty} \esp{}{\max_{1\le i \le n} W_{ni}(\X)} =0;
		\]
		\item[(iii)]\label{iii} for all $\epsilon >0$ there exists $\delta >0 $ such that for any
		$\eta^*$
		bounded and continuous function fulfilling \mbox{$\mathbb{E}_{\X}((\eta(\X) -
			\eta^*(\X))^2) < \delta$} we have that
		\[
		\esp{}{\sum_{i=1}^n W_{ni}(\X)(\eta^*(\X_i)-\eta(\X_i))^2} <
		\epsilon,
		\]
	\end{itemize}
	then $\etahat_n$ is mean square consistent.
\end{thm}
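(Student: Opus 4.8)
The plan is to run Stone's classical weak-consistency argument in this abstract weighted setting, splitting the error into a noise part and a bias part. Writing $Y_i=\eta(\X_i)+e_i$ and using $\sum_{i=1}^n W_{ni}(\X)=1$,
\[
\etahat_n(\X)-\eta(\X)=\sum_{i=1}^n W_{ni}(\X)\,e_i+\sum_{i=1}^n W_{ni}(\X)\big(\eta(\X_i)-\eta(\X)\big),
\]
so $\esp{}{(\etahat_n(\X)-\eta(\X))^2}\le 2\,\esp{}{\big(\sum_i W_{ni}(\X)e_i\big)^2}+2\,\esp{}{\big(\sum_i W_{ni}(\X)(\eta(\X_i)-\eta(\X))\big)^2}$. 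For the noise term I would condition on $\XX=(\X,\X_1,\dots,\X_n)$: the weights depend only on $\XX$ and are independent of the errors, so under model $(\ref{modelo})$ with $\esp{e\vert\X}{e|\X}=0$ we have $\esp{}{e_i\mid\XX}=0$, $\esp{}{e_ie_j\mid\XX}=0$ for $i\neq j$ and $\esp{}{e_i^2\mid\XX}=\sigma^2$; hence the conditional second moment is $\sigma^2\sum_i W_{ni}^2(\X)\le\sigma^2\max_i W_{ni}(\X)$, and taking expectations and using $(ii)$ this term vanishes.

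For the bias term, by Jensen's inequality for the probability weights $W_{ni}(\X)$ it is enough to bound $\esp{}{\sum_i W_{ni}(\X)(\eta(\X_i)-\eta(\X))^2}$. Fix $\epsilon>0$, take $\delta\in(0,\epsilon]$ as in $(iii)$, and choose a bounded continuous $\eta^*$ with $\esp{\X}{(\eta(\X)-\eta^*(\X))^2}<\delta$, which is possible since bounded uniformly continuous functions are dense in $L^2(\Hi,\mu)$. Writing $\eta(\X_i)-\eta(\X)=[\eta(\X_i)-\eta^*(\X_i)]+[\eta^*(\X_i)-\eta^*(\X)]+[\eta^*(\X)-\eta(\X)]$ and using $(a+b+c)^2\le 3(a^2+b^2+c^2)$: the first piece $\esp{}{\sum_i W_{ni}(\X)(\eta(\X_i)-\eta^*(\X_i))^2}$ is $<\epsilon$ by $(iii)$; the third piece equals $\esp{\X}{(\eta^*(\X)-\eta(\X))^2}<\delta\le\epsilon$; and for the middle piece $\esp{}{\sum_i W_{ni}(\X)(\eta^*(\X_i)-\eta^*(\X))^2}$ I split the sum over $\{d(\X,\X_i)\le a_n(\X)\}$ and its complement, with $a_n(\X)$ from $(i)$. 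On the first event the summand is bounded by $\sup_{d(z,\X)\le a_n(\X)}(\eta^*(z)-\eta^*(\X))^2$, which is $\le 4\norm{\eta^*}_\infty^2$ and $\to 0$ a.s.\ because $\eta^*$ is continuous and $a_n(\X)\to 0$ a.s., so dominated convergence sends its expectation to $0$; on the complement the summand is $\le 4\norm{\eta^*}_\infty^2$, whence that contribution is $\le 4\norm{\eta^*}_\infty^2\,\esp{}{\sum_i W_{ni}(\X)\ind_{\{d(\X,\X_i)>a_n(\X)\}}}\to 0$ by $(i)$. Thus $\limsup_n\esp{}{\sum_i W_{ni}(\X)(\eta(\X_i)-\eta(\X))^2}\le 6\epsilon$, and since $\epsilon$ is arbitrary the bias term vanishes, which completes the argument.

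The genuinely routine parts are the Jensen/Cauchy--Schwarz reductions and the dominated-convergence estimate for the modulus-of-continuity term. The step that needs the most care is the noise term: one must observe that the weights are measurable functions of $(\X,\X_1,\dots,\X_n)$ alone, so that the i.i.d.\ structure and $\esp{e\vert\X}{e|\X}=0$ make every cross term vanish under conditioning, converting the ``no dominating weight'' condition $(ii)$ into a variance bound; and one must treat the random radius $a_n(\X)$ carefully in the bias term. Note that condition $(iii)$ is exactly the mechanism that absorbs the possibly-discontinuous component $\eta-\eta^*$ of the target, so that this general theorem carries no Besicovitch-type assumption — that condition only returns later when $(iii)$ is verified for the concrete kernel weights $(\ref{pesos_kernel})$.
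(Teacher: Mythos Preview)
The paper does not give its own proof of this theorem: it is quoted verbatim as Theorem~3.4 of \cite{forzani_fraiman_llop_2012}, with the explicit remark that the proofs of these preliminary results ``can be found in \cite{forzani_fraiman_llop_2012}''. So there is no in-paper argument to compare against.

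That said, your proof is correct and is exactly the Stone-type decomposition one expects (and that the cited paper carries out). The noise term is handled properly: the weights are functions of $(\X,\X_1,\dots,\X_n)$ only, so conditioning kills the cross terms and leaves $\sigma^2\sum_i W_{ni}^2(\X)\le\sigma^2\max_i W_{ni}(\X)$, controlled by~(ii). For the bias term, your three-way split $\eta(\X_i)-\eta(\X)=[\eta-\eta^*](\X_i)+[\eta^*(\X_i)-\eta^*(\X)]+[\eta^*-\eta](\X)$ is the standard move; condition~(iii) absorbs the first piece, the $L^2$-closeness of $\eta^*$ to $\eta$ handles the third, and for the middle piece the split along $\{d(\X,\X_i)\le a_n(\X)\}$ together with continuity and boundedness of $\eta^*$ and condition~(i) is exactly right. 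The density of bounded continuous functions in $L^2(\Hi,\mu)$ is legitimate here because $\Hi$ is a separable metric space and $\mu$ is Borel. Your closing remark that~(iii) is what replaces a Besicovitch-type hypothesis at this level of generality is also on point.
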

\begin{corollary}[Corollary 3.3]\label{cor:2stone}
	Let $U_n$ be a sequence of pro\-ba\-bility weights satisfying conditions (i), (ii) and
	(iii) of Theorem \ref{stone_version}. If $W_n$ is a sequence of weights such that
	$\sum_{i=1}^n W_{ni}(\X) = 1$ and, for each \mbox{$n\ge 1$},
	$\abs{W_n}\le M U_n$ for some constant $M\ge 1$, then the estimator  given in (\ref{estimador_general}) with weights $W_n (\X)$ is mean square consistent.
\end{corollary}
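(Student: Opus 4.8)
The plan is to obtain the conclusion as a single application of Theorem~\ref{stone_version} to the weights $W_n$. By hypothesis $\sum_{i=1}^n W_{ni}(\X)=1$, and $\eta \in L^2(\Hi,\mu)$ by \textit{F3}, so the only thing left to verify is that $W_n$ inherits conditions (i), (ii) and (iii) of that theorem. The single mechanism I would use is the pointwise domination $\abs{W_{ni}(\X)}\le M\,U_{ni}(\X)$ together with the fact that $U_n$ already satisfies (i)--(iii) for some sequence $a_n(\X)$ and some correspondence $\epsilon\mapsto\delta$. The one structural point to keep in mind is that the $W_{ni}$ need not be nonnegative, so at each step I would first pass to $\abs{W_{ni}}$ (using $\abs{\esp{}{Z}}\le\esp{}{\abs{Z}}$ whenever an expectation is involved) and only then invoke the bound by $M\,U_{ni}\ge 0$.

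For condition (i) I would reuse the very sequence $a_n(\X)\to 0$ a.s.\ supplied by (i) for $U_n$. Since the indicator and the $U_{ni}$ are nonnegative,
\[
\abs{\esp{}{\sum_{i=1}^n W_{ni}(\X)\,\ind_{\llave{d(\X,\X_i)>a_n(\X)}}}} \le M\,\esp{}{\sum_{i=1}^n U_{ni}(\X)\,\ind_{\llave{d(\X,\X_i)>a_n(\X)}}} \longrightarrow 0,
\]
which is exactly (i) for $W_n$. For condition (ii), the constraint $\sum_{i=1}^n W_{ni}(\X)=1$ forces $\max_{1\le i\le n} W_{ni}(\X)\ge 1/n>0$, whence $0\le \max_i W_{ni}(\X)\le M\max_i U_{ni}(\X)$; taking expectations and using (ii) for $U_n$ squeezes $\esp{}{\max_i W_{ni}(\X)}\to 0$.

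For condition (iii), given $\epsilon>0$ I would apply (iii) for $U_n$ with $\epsilon/M$ in place of $\epsilon$, obtaining $\delta>0$ such that $\esp{\X}{(\eta(\X)-\eta^*(\X))^2}<\delta$ implies $\esp{}{\sum_i U_{ni}(\X)(\eta^*(\X_i)-\eta(\X_i))^2}<\epsilon/M$ for every bounded continuous $\eta^*$. Because $(\eta^*(\X_i)-\eta(\X_i))^2\ge 0$, the same $\delta$ yields
\[
\esp{}{\sum_{i=1}^n W_{ni}(\X)(\eta^*(\X_i)-\eta(\X_i))^2} \le M\,\esp{}{\sum_{i=1}^n U_{ni}(\X)(\eta^*(\X_i)-\eta(\X_i))^2} < \epsilon,
\]
which is (iii) for $W_n$. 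With (i)--(iii) established, Theorem~\ref{stone_version} delivers the mean square consistency of $\etahat_n$ with weights $W_n$. I do not expect a genuine obstacle here: the argument is a routine domination. The only point demanding care---which I would flag explicitly---is that the possibly signed weights $W_{ni}$ must be controlled through $\abs{W_{ni}}$ rather than through $W_{ni}$ itself, and that Theorem~\ref{stone_version} imposes no sign restriction on its weights beyond the normalization $\sum_i W_{ni}=1$, so that the verified conditions legitimately trigger it.
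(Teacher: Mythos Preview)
The paper does not actually prove this corollary: it is quoted verbatim as ``Corollary 3.3'' from \cite{forzani_fraiman_llop_2012}, together with Theorem~\ref{stone_version} and the auxiliary lemmas, under the heading ``some preliminary results whose proofs can be found in \cite{forzani_fraiman_llop_2012}.'' So there is no in-paper argument to compare against; the corollary is used as a black box in the proof of Proposition~\ref{consistencia_knn-k}.

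Your argument is the standard one and is correct: you transport (i)--(iii) from $U_n$ to $W_n$ via the pointwise domination $\abs{W_{ni}}\le M\,U_{ni}$ and then invoke Theorem~\ref{stone_version}. One small caveat on your closing remark: in this paper the weights in the general estimator \eqref{estimador_general} are introduced with the standing assumption $W_{ni}(\X)\ge 0$, so the ``possibly signed'' scenario you flag does not actually occur here (and in the only application, the kernel weights \eqref{pesos_kernel}, nonnegativity is automatic). Under that sign convention the detour through $\abs{W_{ni}}$ and the $1/n$ lower bound for $\max_i W_{ni}$ are unnecessary, though harmless. With that cosmetic simplification your proof matches what one would expect the cited proof to be.
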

\begin{lemma}[Lemma A.1]\label{prob_sop}
	Let $\Hi$ be a separable metric space. If $A = \sop{\mu} = \{ x \in
	\Hi :\muu{\bola{x}{\epsilon}}>0, \forall \, \epsilon>0)\}$ then $\muu{A} = 1$.
\end{lemma}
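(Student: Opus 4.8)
The plan is to prove the equivalent assertion $\muu{A^c} = 0$, where $A^c = \Hi \setminus \sop{\mu} = \llave{x \in \Hi : \muu{\bola{x}{\epsilon}} = 0 \text{ for some } \epsilon > 0}$. First I would check that $A^c$ is open, and in particular Borel measurable, so that $\muu{A^c}$ is well defined: if $x \in A^c$ and $\epsilon_x > 0$ satisfies $\muu{\bola{x}{\epsilon_x}} = 0$, then by the triangle inequality every $y$ with $d(x,y) < \epsilon_x/2$ has $\bola{y}{\epsilon_x/2} \subseteq \bola{x}{\epsilon_x}$, whence $\muu{\bola{y}{\epsilon_x/2}} = 0$ and $y \in A^c$. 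Thus $A^c$ contains a ball around each of its points (so that $A = \sop{\mu}$ is closed).

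Next I would use separability to replace the a priori uncountable open cover $\{\bola{x}{\epsilon_x} : x \in A^c\}$ of $A^c$ by a countable one. Fix a countable dense set $D = \{d_1, d_2, \ldots\} \subseteq \Hi$. For each $x \in A^c$, with $\epsilon_x$ as above, choose $d_{j(x)} \in D$ with $d(x, d_{j(x)}) < \epsilon_x/3$ and a rational $q_x \in (\epsilon_x/3,\, 2\epsilon_x/3)$; then $x \in \bola{d_{j(x)}}{q_x}$ and, again by the triangle inequality, $\bola{d_{j(x)}}{q_x} \subseteq \bola{x}{\epsilon_x}$, so $\muu{\bola{d_{j(x)}}{q_x}} = 0$. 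Hence $A^c$ is contained in the union of the countable family $\mathcal N = \llave{\bola{d_j}{q} : j \ge 1,\ q \in \mathbb{Q},\ q > 0,\ \muu{\bola{d_j}{q}} = 0}$, and countable subadditivity of $\mu$ yields $\muu{A^c} \le \sum_{B \in \mathcal N} \mu(B) = 0$. Therefore $\muu{A} = 1 - \muu{A^c} = 1$.

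This argument presents essentially no obstacle; the only point requiring a little care is the passage from the uncountable collection of balls $\bola{x}{\epsilon_x}$, $x \in A^c$, to a countable subfamily still covering $A^c$, which is precisely what the $\epsilon/3$ device — equivalently, the Lindel\"of property of the separable metric space $\Hi$ — delivers.
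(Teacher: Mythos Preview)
Your argument is correct and is precisely the standard proof that the support of a Borel probability measure on a separable metric space has full measure: show the complement is open, cover it by $\mu$-null balls, extract a countable subcover via separability (Lindel\"of), and apply countable subadditivity. There is nothing to object to.

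As for comparison: the present paper does not actually supply a proof of this lemma. It is quoted verbatim as ``Lemma A.1'' from \cite{forzani_fraiman_llop_2012} and used as an auxiliary fact in the proof of Proposition~\ref{consistencia_knn-k}. So there is no in-paper argument to compare against; your proof simply fills in what the authors took for granted by citation, and it does so by the route one would expect in the original reference as well.
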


\begin{proof}[Proof of Proposition \ref{consistencia_knn-k}]
	Let $\xf \in \sop{\mu}$ be fixed. Let us observe that, since $K$ is regular, there {exist} constants
	\mbox{$0<c_1 < c_2 < \infty$} such that, for each $i$,
	\begin{equation}\label{def_Uni}
	W_{ni}(\xf) = \frac{K\pa{\frac{d(\X_i, \xf)}{h_n(\xf)}}}{\sum_{j=1}^n
		K\pa{\frac{d(\X_j, \xf)}{h_n(\xf)}}} \le \frac{c_2}{c_1}  \frac{\ind_{\llave{ d(\X_i,
				\xf)\le h_n(\xf)}}}{\sum_{j=1}^n \ind_{\llave{d(\X_j, \xf) \le h_n(\xf)}}} \doteq
	\frac{c_2}{c_1}U_{ni}(\xf).
	\end{equation}
	Let $h_n(\xf)\to 0$ such that $h_n(\xf) \ge H_n(\xf)$ ($H_n(\xf)\to 0$ by Lemma
	\ref{lema:d(Xk,X)to0}, for $\xf \in \sop{\mu}$). From (\ref{def_Uni}) and
	Corollary \ref{cor:2stone}, it suffices to prove that the weights $U_{ni}$ satisfy
	conditions (i), (ii) and (iii) of Theorem \ref{stone_version}. To prove (i) let us take
	$a_n(\xf) = h_n^{1/2}(\xf) \to 0$. Then, by Lemma \ref{prob_sop},
	\begin{align*} \label{ec:4cap4}
	\esp{}{\sum_{i=1}^n U_{ni}(\X) \ind_{\llave{d(\X_i, \X)>h_n(\X)^{1/2}}}}
	\\&\hspace{-5.7cm}=\esp{\X}{\esp{\Dn|\X}{\ind_{\{\X \in
				\sop{\mu}\}}\sum_{i=1}^n U_{ni}(\X) \ind_{\llave{d(\X_i, \X)>h_n(\X)^{1/2}}}\Big| \X \in \sop{\mu}}}.
	\end{align*}
	Given $\epsilon>0$, let $\xf \in \sop{\mu}$ be fixed. Since
	\mbox{$h_n(\xf) \to 0$}, there exists $N_1 = N_1(\xf)$ such that if
	$n\ge N_1$, $\ind_{\llave{h_n(\xf)^{1/2} < d(\xf_i, \xf) \le h_n(\xf)}} =
	0$ for all $i$ and consequently,
	\[
	\esp{\Dn}{\frac{1}{\sum_{j=1}^n \ind_{\llave{d(\xf_j, \xf) \le h_n(\xf)}}}\sum_{i=1}^n
		\ind_{\llave{h_n(\xf)^{1/2}< d(\xf_i, \xf) \le h_n(\xf)}}} < \epsilon.
	\]
	In addition, $\frac{\sum_{i=1}^n\ind_{\llave{h_n(\xf)^{1/2}< d(\xf_i, \xf) \le
				h_n(\xf)}}}{\sum_{j=1}^n \ind_{\llave{d(\xf_j, \xf) \le h_n(\xf)}}}  \le 1$ from what
	follows that,
	\[
	\esp{\Dn}{\frac{1}{\sum_{j=1}^n \ind_{\llave{d(\xf_j, \xf) \le h_n(\xf)}}}\sum_{i=1}^n
		\ind_{\llave{h_n(\xf)^{1/2}< d(\xf_i, \xf) \le h_n(\xf)}}} \le 1.
	\]
	Therefore, by the dominated convergence theorem we have that condition (i) is satisfied. Now, since 
	$h_n(\xf)\ge H_n(\xf)$, 
	$$\sum_{j=1}^n \ind_{\llave{d(\X_j, \xf) \le h_n(\xf)}} \ge \sum_{j=1}^n \ind_{\llave{d(\X_j, \xf) \le
			H_n(\xf)}} = k_n \rightarrow \infty.$$
	Therefore,
	\[
	\max_{1\le i \le n} U_{ni}(\xf) \le \max_{1\le i \le n} \frac{1}{\sum_{j=1}^n
		\ind_{\llave{d(\X_j, \xf) \le h_n(\xf)}}} \le \frac{1}{k_n} \to 0,
	\]
	from what {we derive (ii)  using} the  dominated convergence theorem. It remains to verify
	that condition (iii) holds. Since $\eta \in L^2(\Hi, \mu)$ which is separable and complete, there exists $\eta^*$ continuous and bounded such that, for all $\delta>0$, $\mathbb{E}_{\X}((\eta(\X)- \eta^*(\X))^2) < \delta$.  Then,
	\begin{align*}
	&\esp{}{\sum_{i=1}^n U_{ni}(\X)(\eta^*(\X_i)-\eta(\X_i))^2}
	\\&= \esp{\X}{\esp{\Dn|\X}{\ind_{\{\X \in \sop{\mu}\}}\sum_{i=1}^n
			U_{ni}(\X) (\eta^*(\X_i)-\eta(\X_i))^2 \vert \X \in\sop{\mu} }}.
	\end{align*}
	Let $\xf \in \sop{\mu}$ be fixed. From \cite{forzani_fraiman_llop_2012}, Lemma A.7, for any nonnegative bounded measurable function $f$, we have
	\[
	\esp{\Dn}{\sum_{i=1}^n U_{ni}(\xf)f(\X_i)}\le 12 \frac{1}{\muu{\mathcal{B}(\xf, h_n(\xf))}}
	\int_{\mathcal{B}(\xf, h_n(\xf))} f(y)  \, d\mu(y).
	\]
	Then, applying the inequality to $f(\X_i) = (\eta^*(\X_i)-\eta(\X_i))^2$ we get
	\begin{align*}
	&\esp{\Dn}{\sum_{i=1}^n U_{ni}(\xf)
		(\eta^*(\X_i)-\eta(\X_i))^2}  \nonumber \\&\hspace{0.5cm}\lesssim
	\frac{1}{\muu{\mathcal{B}(\xf,h_n(\xf))}} \int_{\mathcal{B}(\xf,h_n(\xf))}
	(\eta^*(y)-\eta(y))^2  \, d\mu(y) \\&\hspace{0.5cm}\le
	\frac{1}{\muu{\mathcal{B}(\xf,h_n(\xf))}}  \int_{\mathcal{B}(\xf,h_n(\xf))}
	(\eta^*(y)-\eta^*(\xf))^2  \, d\mu(y)  \\&\hspace{1cm}+
	\frac{1}{\muu{\mathcal{B}(\xf,h_n(\xf))}}  \int_{\mathcal{B}(\xf,h_n(\xf))}
	(\eta^*(\xf)-\eta(\xf))^2  \, d\mu(y)  \\&\hspace{1cm}+
	\frac{1}{\muu{\mathcal{B}(\xf,h_n(\xf))}}  \int_{\mathcal{B}(\xf,h_n(\xf))}
	(\eta(\xf)-\eta(y))^2 \, d\mu(y)\\&\hspace{0.5cm} \\&\doteq  f_{1,n}(\xf) +
	f_{2,n}(\xf) + f_{3,n}(\xf)).
	\end{align*}
	This part will be complete if we show that the expectation with respect to $\X$ of these
	three functions converges to zero. For this, let $\epsilon>0$ and $\delta\le \epsilon$.
	Since $\eta^*$ is continuous, there exists $r = r(\xf,\epsilon)>0$ such that if
	$d(\xf,y)<
	r$ then $\vert \eta^*(\xf) - \eta^*(y) \vert < \epsilon$. On the other hand, since $h_n(\xf) \to 0$, for that $r(\xf,\epsilon)>0$,  there exists $N_2 = N_2(\xf,r(\xf,\epsilon))$ such that if $n \ge N_2$,
	$h_n(\xf) < r$. Then, $f_{1,n}(\xf) = \frac{1}{\muu{\mathcal{B}(\xf,h_n(\xf))}}
	\int_{\mathcal{B}(\xf,h_n(\xf))} (\eta^*(y)-\eta^*(x))^2  \, d\mu(y) < \epsilon$ for $n \ge N_2$
	and in addition it is bounded so, by the dominated convergence theorem we have that
	\[
	\mathbb E_{\X}(f_{1,n}(\X))  \to 0.
	\]
	{For the second term, since $\delta \le \epsilon$, we have that}
	\[
	\mathbb E_{\X}(f_{2,n}(\X)) = \mathbb{E}_{\X}((\eta(\X)- \eta^*(\X))^2) < \epsilon.
	\]
	Finally, since $\eta $ is bounded,
	\[
	\mathbb E_{\X}(f_{3,n}(\X))\lesssim \esp{\X}{ \frac{1}{\muu{\mathcal{B}(\X,h_n(\X))}}
		\int_{\mathcal{B}(\X,h_n(\X))} \abs{\eta(\X)-\eta(y)} \, d\mu(y)},
	\]
	which converge to zero if the bounded random variables
	$$ \frac{1}{\muu{\mathcal{B}(\X,h_n(\X))}}  \int_{\mathcal{B}(\X,h_n(\X))}
	\abs{\eta(\X)-\eta(y)} \, d\mu(y)$$ converge to zero in probability. To see this, let $\lambda>0$ be
	fixed. For every $\delta_0>0$,
	\begin{align*}
	&\prob{\X}{\frac{1}{\muu{\mathcal{B}(\X,h_n(\X))}}
		\int_{\mathcal{B}(\X,h_n(\X))} \abs{\eta(\X)-\eta(y)} \, d\mu(y)>\lambda}  \\ &\le
	\prob{\X}{h_n(\X) > \delta_0} + \sup_{\delta \le \delta_0}
	\prob{\X}{\frac{1}{\muu{\mathcal{B}(\X,\delta)}}
		\int_{\mathcal{B}(\X,\delta)} \abs{\eta(\X)-\eta(y)} \, d\mu(y) >\lambda}.
	\end{align*}
	Since $h_n(\X) \to 0$ a.s. the first term converges to zero while the second term does thanks to the truth of the Besicovitch condition (\ref{ec:besicovitch}).
\end{proof}

\begin{proof}[Proof of Theorem \ref{teo3}]
	
	\
	
	\textit{Proof of (a):} Let us define $\Dn = \{\X_1,\ldots,\X_n\}$ and $\Cn =
	\{Y_1,\ldots,Y_n\}$. In order to prove the mean square consistency, we consider
	\[
	\esp{}{(\etahat_{n,p}(\X) - \eta(\X))^2)}  = \esp{\X}{ \esp{\Dn,\Cn|\X}{(\etahat_{n,p}(\X) -
			\eta(\X))^2)\big | \X}}.
	\]
	Let $\xf \in \sop{\mu}$ be fixed.  {To simplify the  notation, we set $
		\esp{}{\cdot} = \esp{\Dn,\Cn|\X}{\cdot}$. } Then, for a particular $h_n (\xf) \ge h^*_n(\xf)$ to be
	defined later, let us define the {\it theoretical quantities}
	\[
	K\pa{\frac{d(\xf,\X_i)}{h_n(\xf)}} \doteq K_i(\xf) \doteq K_i\hspace{0.5cm}
	\text{ and } \hspace{0.5cm} K\pa{\frac{d_p(\xf,\X_i)}{h_{n,p}(\xf)}} \doteq K_{i,p}(\xf)\doteq
	K_{i,p},
	\]
	and as in (\ref{pesos_kernel}),
	\[
	\frac{K_i}{\sum_{j=1}^n K_j} \doteq W_i\hspace{0.5cm}
	\text{ and } \hspace{0.3cm} \frac{K_{i,p}}{\sum_{j=1}^n K_{j,p} }  \doteq W_{i,p}.
	\]
	Let us consider the following auxiliary unobservable quantities
	\[
	\etahat_n(\xf) = \sum_{i=1}^n W_iY_i, \hspace{0.3cm} \eta_n(\xf) = \sum_{i=1}^n W_i \eta(\X_i),
	\hspace{0.3cm}  \text{ and }  \hspace{0.3cm} \eta_{n,p}(\xf) = \sum_{i=1}^n W_{i,p}\eta(\X_i).
	\]
	Then we have,
	\begin{align*}
	\etahat_{n,p}(\xf) - \eta(\xf) &=  [\etahat_{n,p}(\xf) - \eta_{n,p}(\xf)] + [\eta_{n,p}(\xf) -
	\eta_n(\xf)] + [\eta_n(\xf) - \etahat_n(\xf)]  \\ &\hspace{1cm} + [\etahat_n(\xf) - \eta(\xf)]\\ &=  \sum_{i=1}^n W_{i,p} (Y_i- \eta(\X_i)) + \sum_{i=1}^n (W_{i,p}-W_i) \eta(\X_i) \\ &\hspace{1cm}+
	\sum_{i=1}^n W_i (\eta(\X_i)-Y_i) + [\etahat_n(\xf) - \eta(\xf)] \\
	&=   \sum_{i=1}^n (W_{i,p} - W_i) (Y_i- \eta(\X_i)) + \sum_{i=1}^n
	(W_{i,p}-W_i) \eta(\X_i) \\ &\hspace{1cm}+ [\etahat_n(\xf) - \eta(\xf)].
	\end{align*}
	Taking squares and expectation in $\Dn, \Cn$ we have
	\begin{align*}
	\esp{}{(\etahat_{n,p}(\xf) - \eta(\xf))^2)} &\lesssim
	\esp{}{\pa{ \sum_{i=1}^n (W_{i,p} - W_i) (Y_i-
			\eta(\X_i))}^2} \\& + \esp{}{\pa{\sum_{i=1}^n
			(W_{i,p}-W_i) \eta(\X_i)}^2} \\ &\hspace{1cm}+ \esp{}{\pa{[\etahat_n(\xf) -
			\eta(\xf)]}^2}\\ &\doteq I + II + III.
	\end{align*}
	By Proposition \ref{consistencia_kernel} and Remark \ref{h-mayor} (since $h_n (\xf) \to 0$
	and  $h_n (\xf) \ge h^*_n(\xf)$), taking expectation on $\X$ we have that term $III$ converges to
	zero. For the first term we have,
	\begin{align*}
	I &\approx \esp{}{\pa{ \sum_{i=1}^n (W_{i,p} - W_i)  (Y_i-
			\eta(\X_i))}^2} \nonumber \\ &=\esp{}{\sum_{i=1}^n \sum_{j=1}^n
		(W_{i,p} - W_i) (W_{j,p}  - W_j) e_i e_j} &  \pa{Y_i - \eta(\X_i) =
		e_i} \nonumber  \\&= \esp{}{\sum_{i=1}^n \sum_{j=1}^n
		(W_{i,p} - W_i) (W_{j,p}  - W_j) \esp{\Cn \vert \Dn}{\e_i
			\e_j\vert \Dn}}  \\&= \esp{}{\sum_{i=1}^n |W_{i,p} -
		W_i|^2 \esp{\Cn \vert \Dn}{\e_i^2 \vert \Dn}} &
	\pa{\text{cond. ind.}} \nonumber  \\&= \sigma^2
	\esp{}{\sum_{i=1}^n |W_{i,p} - W_i|^2}. \nonumber
	\end{align*}
	On the other hand, since $\eta$ is bounded, in $II$ we have
	\begin{equation*}
	II =  \esp{}{\pa{\sum_{i=1}^n (W_{i,p} - W_i) \eta(\X_i)}^2} \lesssim \esp{}{\pa{\sum_{i=1}^n
			\abs{W_{i,p} - W_i}}^2}.
	\end{equation*}
	We will see that terms $I$ and $II$ converge to zero by splitting the sum in different pieces:
	\begin{enumerate}
		\item[\textbf{(1)}] ${A_1 \doteq \{i: d_p(\xf,\X_i) > h_{n,p}(\xf),d(\xf,\X_i) > h_n(\xf)\}}$; 
		\item[\textbf{(2)}] ${A_2 \doteq \{i: d_p(\xf,\X_i) > h_{n,p}(\xf),d(\xf,\X_i)\le h_n(\xf)\}}$;
		\item[\textbf{(3)}] ${A_3 \doteq \{i: d_p(\xf,\X_i) \le h_{n,p}(\xf),d(\xf,\X_i) > 3h_n(\xf)\}}$;
		\item[\textbf{(4)}] ${A_4 \doteq \{i: d_p(\xf,\X_i) \le h_{n,p}(\xf),d(\xf,\X_i) \le 3 h_n(\xf)\}}$.
	\end{enumerate}
	Case \textbf{(1)} is trivial since in this case $K$ is supported in $[0,1]$ which impies that $W_{i,p} = W_i = 0$. Let us start, therefore, with case \textbf{(2)}.

	\begin{enumerate}
		\item[\textbf{(2)}] Let $\bm{A_2 \doteq \{i: d_p(\xf,\X_i) > h_{n,p}(\xf),d(\xf,\X_i)\le h_n(\xf)\}}$.  Observe that in this case $W_{i,p}
		= 0$ since $K$ is supported in $[0,1]$. Therefore, since $|W_i| \le 1$ we get 
		\[
		I_{A_2} \doteq \esp{}{\sum_{i=1}^n  |W_i|^2 \ind_{\{i\in A_2\}}} \le \esp{}{\sum_{i=1}^n
			\ind_{\{i\in A_2\}}},
		\]
		and,
		\begin{equation}\label{cotaA1}
		II_{A_2} \doteq \esp{}{\pa{\sum_{i=1}^n |W_i|\ind_{\{i\in A_2\}}}^2} \le \esp{}{\pa{\sum_{i=1}^n
				\ind_{\{i\in A_2\}}}^2} \doteq C_{A_2}. 
		\end{equation}
		Observe that the i.i.d. random variables $\ind_{\{i\in A_2\}}$ have a Bernoulli distribution with parameter
		\begin{align*}
		p &= \prob{\X_1}{d_p(\xf,\X_1) > h_{n,p}(\xf), d(\xf,\X_1) \le h_n(\xf)} \\
		&\le \prob{\X_1}{d_p(\xf,\X_1) - d(\xf,\X_1)  \ge h_{n,p}(\xf) - h_n(\xf)}  \\ &\le
		\prob{\X_1}{|d_p(\xf,\X_1) - d(\xf,\X_1)| \ge c_{n,p}}.  & \pa{\text{by H3.2}}
		\end{align*}
		As a consequence, the random variable $Z \doteq \sum_{i=1}^n \ind_{\{i\in A_2\}}$ has Binomial distribution with parameters $n$ and $p$ and expectation $\esp{}{Z} = np$. This implies that		
		\begin{equation}\label{IA1}
		I_{A_2}\lesssim \esp{}{Z} \le n \prob{\X_1}{|d_p(\xf,\X_1) - d(\xf,\X_1)| \ge c_{n,p}},
		\end{equation}
		and, since $ \esp{}{Z^2} = n p (1- p) + n^2 p^2 \le n p + (n p)^2$,
		\begin{align}\label{IIA1}
		II_{A_2} \le  C_{A_2} &\lesssim \esp{}{Z^2} \le n \prob{\X_1}{|d_p(\xf,\X_1) - d(\xf,\X_1)| \ge
			c_{n,p}}
		\\ &\hspace{3cm}+ (n\prob{\X_1}{|d_p(\xf,\X_1) - d(\xf,\X_1)| \ge c_{n,p}})^2\hspace{-0.1cm}. \nonumber
		\end{align}
		\item[\textbf{(3)}] Let $\bm{A_3 \doteq\{i: d_p(\xf,\X_i) \le h_{n,p}(\xf),d(\xf,\X_i) > 3h_n(\xf)\}}$. Observe that in this
		case $W_i = 0$ since $K$ is supported in $[0,1]$. Then, since $\forall \, i, \, \abs{W_{i,p}} \le 1$
		we get
		\begin{equation*}
		I_{A_3} \doteq \esp{}{\sum_{i=1}^n \abs{W_{i,p}}^2 \ind_{\{i \in A_3\}}} \le
		\esp{}{\sum_{i=1}^n \ind_{\{i \in A_3\}}},
		\end{equation*}
		and
		\begin{equation}\label{IIA21a}
		II_{A_3} \doteq \esp{}{\pa{\sum_{i=1}^n \abs{W_{i,p}}\ind_{\{i \in A_3\}}}^2} \le 
		\esp{}{\pa{\sum_{i=1}^n \ind_{\{i \in A_3\}}}^2}.
		\end{equation}
		Now, the i.i.d. random variables $\ind_{\{i \in A_3\}}$ have Bernoulli distribution with 
		parameter
		\begin{align*}
		p &= \prob{\X_1}{d_p(\xf,\X_1) \le h_{n,p}(\xf), d(\xf,\X_1) > 3 h_n(\xf)} \\ &\le
		\prob{\X_1}{d(\xf,\X_1) - d_p(\xf,\X_1) \ge 3 h_n(\xf) - h_{n,p}(\xf)}.
		\end{align*}
		As a consequence, the random variable $Z\doteq \sum_{i=1}^n
		\ind_{\{i \in A_3\}}$ has Binomial distribution
		with parameters $n$ and $p$. But from (\textit{H3.1}), for $n$ large enough, $h_n(\xf) \ge \pa{\frac{1+C_2}{2}}
		c_{n,p}$ which, together with \textit{H3.2} implies that
		\[
		3 h_n(\xf) - h_{n,p}(\xf) \ge 2 h_n(\xf) - C_2 c_{n,p}  \ge c_{n,p},
		\]
		and then, for $n$ large enough,
		\[
		p \le \prob{\X_1}{|d_p(\xf,\X_1) - d(\xf,\X_1)|  \ge c_{n,p}}.
		\]
		Therefore, since $\esp{}{Z} = n p$ we have
		\begin{equation}\label{IA21}
		I_{A_3}  \lesssim \esp{}{Z} \le n \prob{\X_1}{|d_p(\xf,\X_1) - d(\xf,\X_1) | \ge c_{n,p}},
		\end{equation}
		and since $\esp{}{Z^2} = n p (1-p) + n^2 p^2  \le n p + (n p)^2$,
		\begin{align}\label{IIA21}
		II_{A_3} &\lesssim \esp{}{Z^2} \le n \prob{\X_1}{|d_p(\xf,\X_1) - d(\xf,\X_1)| \ge c_{n,p}})
		\\ &\hspace{3cm}+ (n \prob{\X_1}{|d_p(\xf,\X_1) - d(\xf,\X_1)|  \ge c_{n,p}}))^2\hspace{-0.1cm}.  \nonumber
		\end{align}

		\item[\textbf{(4)}] Let $\bm{A_4 \doteq \{i: d_p(\xf,\X_i) \le h_{n,p}(\xf),d(\xf,\X_i) \le 3 h_n(\xf)\}}$.
		In this case we write,
		\begin{align*}
		W_{i,p} - W_i &= \frac{K_{i,p}}{\sum_{j=1}^n K_{j,p} }-\frac{K_i}{\sum_{j=1}^n
			K_j} \nonumber \\ &= \frac{K_{i,p}}{\sum_{j=1}^n K_{j,p} }-\frac{K_i}{\sum_{j=1}^n
			K_{j,p} }+\frac{K_i}{\sum_{j=1}^n K_{j,p} }-\frac{K_i}{\sum_{j=1}^n K_j} \\ &=
		(K_{i,p}-K_i) \frac{1}{\sum_{j=1}^n K_{j,p} } + K_i \frac{\sum_{j=1}^n (K_j - K_{j,p})}{\sum_{j=1}^n
			K_j \sum_{j=1}^n K_{j,p}}  \nonumber \\ &= (K_{i,p}-K_i) \frac{1}{\sum_{j=1}^n K_{j,p} } + W_i
		\frac{\sum_{j=1}^n (K_j - K_{j,p})}{\sum_{j=1}^n K_{j,p}}.\nonumber
		\end{align*}
		Then, 
		\begin{align}\label{IA22a}
		I_{A_4} &\doteq \esp{}{\sum_{i=1}^n |W_{i,p} - W_i|^2 \ind_{\{i \in A_4\}}}\nonumber \\
		&\lesssim \esp{}{\sum_{i=1}^n |K_{i,p}-K_i|^2
			\frac{\ind_{\{i \in A_4\}}}{(\sum_{j=1}^n
				K_{j,p})^2}}  \nonumber \\ &\hspace{0.3cm}+  \esp{}{\sum_{i=1}^n W_i^2 \ind_{\{i \in A_4\}}
			\pa{\frac{\sum_{j=1}^n (K_j - K_{j,p})}{\sum_{j=1}^n
					K_{j,p}}}^2}\\ &\lesssim \esp{}{\sum_{i=1}^n |K_{i,p}-K_i|^2 \frac{\ind_{\{i \in
					A_4\}}}{(\sum_{j=1}^n \ind_{\{j : d_p(\xf,\X_j) \le h_{n,p}(\xf) \}})^2}} & \pa{K \text{ regular}} \nonumber \\
		&\hspace{0.3cm}+ \esp{}{\pa{\frac{\sum_{j=1}^n |K_j
					- K_{j,p}|}{\sum_{j=1}^n K_{j,p}}}^2} & \hspace{-3cm}\pa{|W_i| \le 1, \sum_{i=1}^n W_i = 1} \nonumber \\ &\doteq I_{A_4}^1 + I_{A_4}^2,\nonumber\end{align}
		and,
		\begin{align}\label{IIA22a}
		II_{A_4} &\doteq \esp{}{\pa{\sum_{i=1}^n |W_{i,p} - W_i| \ind_{\{i \in A_4\}}}^2}\nonumber  \\
		&\lesssim \esp{}{\pa{\sum_{i=1}^n |K_{i,p}-K_i| \frac{\ind_{\{i \in A_4\}}}{\sum_{j=1}^n
					K_{j,p}}}^2} \nonumber \\ &\hspace{0.3cm}+ \esp{}{\pa{\sum_{i=1}^n W_i  \ind_{\{i \in A_4\}}
				\frac{\sum_{j=1}^n (K_j - K_{j,p})}{\sum_{j=1}^n K_{j,p}}}^2}\\ &\lesssim
		\esp{}{\pa{\sum_{i=1}^n |K_{i,p}-K_i|\frac{\ind_{\{i \in A_4\}}}{\sum_{j=1}^n \ind_{\{j : d_p(\xf,\X_j) \le h_{n,p}(\xf)\}}}}^2} &\hspace{-0.3cm} \pa{K \text{ regular}} \nonumber \\ &\hspace{0.3cm}+ \esp{}{\pa{\frac{\sum_{j=1}^n |K_j
					- K_{j,p}|}{\sum_{j=1}^n K_{j,p}}}^2} & \hspace{-0.5cm}\pa{|W_i| \le 1} \nonumber \\ &\doteq
		II_{A_4}^1 + II_{A_4}^2.\nonumber
		\end{align}
		Observe that if $\sum_{j=1}^n \ind_{\{j : d_p(\xf,\X_j) \le h_{n,p}(\xf)\}}= 0$ then $\forall \, j, \, \ind_{\{j \in
			A_4\}}=0$ so in this case, $I_{A_4}^1$ and $II_{A_4}^1$ are zero. Then, in
		what follows we will assume that $\sum_{j=1}^n \ind_{\{j : d_p(\xf,\X_j) \le h_{n,p}(\xf)\}} \neq 0$. Since $K$ is Lipschitz
		and we are  {only considering the indexes} $i$ such that $d_p (x,\X_i) \le h_{n,p}(x)$ we get,
		\begin{align*}
		|K_{i,p}-K_i| &= \abs{K\pa{\frac{d_p(\xf,\X_i)}{h_{n,p}(\xf)}}-K\pa{\frac{d(\xf,\X_i)}{h_n(\xf)}}}
		\\&\lesssim \abs{\frac{d_p(\xf,\X_i)}{h_{n,p}(\xf)}-\frac{d(\xf,\X_i)}{h_n(\xf)}} \\ &=
		\frac{\abs{d_p(\xf,\X_i)h_n(\xf)-d(\xf,\X_i)h_{n,p}(\xf)}}{h_{n,p}(\xf)h_n(\xf)}  \\ &\le
		\frac{\abs{d_p(\xf,\X_i)-d(\xf,\X_i)}}{h_n(\xf)} + \frac{d_p (x,\X_i) |h_{n} (x) -
			h_{n,p}(x)|}{h_n(x) h_{n,p}(x)}\\ & \lesssim \frac{\abs{d_p(\xf,\X_i)-d(\xf,\X_i)}}{h_n(\xf)} +
		\frac{c_{n,p}}{h_n(x)}. &\hspace{-1cm} \pa{\text{by  H3.2}}
		\end{align*}
		Therefore, 
		\begin{align} \label{IA221}
		I_{A_4}^1 &\lesssim \frac{1}{h^2_n(\xf)} \esp{}{\sum_{i=1}^n |d_p(\xf,\X_i)-d (\xf,\X_i)|^2
			\frac{\ind_{\{i \in A_4\}}}{(\sum_{j=1}^n
				\ind_{\{j : d_p(\xf,\X_j) \le h_{n,p}(\xf)\}})^2}}  \nonumber \\ &\hspace{1cm}+
		\left(\frac{c_{n,p}}{h_n(x)}\right)^2 \esp{}{\sum_{i=1}^n\frac{\ind_{\{i \in
					A_4\}}}{(\sum_{j=1}^n \ind_{\{j : d_p(\xf,\X_j) \le h_{n,p}(\xf)\}})^2}} \\ &\lesssim \frac{1}{h^2_n(\xf)}
		\esp{}{\sum_{i=1}^n |d_p(\xf,\X_i)-d(\xf,\X_i)|^2 \frac{\ind_{\{j \in A_4\}}}{(\sum_{j=1}^n \ind_{\{j : d_p(\xf,\X_j) \le h_{n,p}(\xf)\}})^2}} \nonumber\\ &\hspace{1cm}+ \left(\frac{c_{n,p}}{h_n(x)}\right)^2\hspace{-0.1cm}, \nonumber
		\end{align}
		and,
		\begin{align}\label{IIA221}
		II_{A_4}^1 &\lesssim \frac{1}{h^2_n(\xf)}  \esp{}{\pa{\sum_{i=1}^n |d_p(\xf,\X_i)-d
				(\xf,\X_i)| \frac{\ind_{\{i \in A_4\}}}{\sum_{j=1}^n \ind_{\{j : d_p(\xf,\X_j) \le h_{n,p}(\xf)\}}}}^2} \nonumber \\
		&\hspace{1cm}+
		\left(\frac{c_{n,p}}{h_n(x)}\right)^2 \esp{}{\pa{\sum_{i=1}^n \frac{\ind_{\{i \in
						A_4\}}}{\sum_{j=1}^n \ind_{\{j : d_p(\xf,\X_j) \le h_{n,p}(\xf)\}}}}^2}\\  &\lesssim \frac{1}{h^2_n(\xf)}
		\esp{}{\pa{\sum_{i=1}^n
				|d_p(\xf,\X_i)-d (\xf,\X_i)| \frac{\ind_{\{i \in A_4\}}}{\sum_{j=1}^n \ind_{\{j : d_p(\xf,\X_j) \le h_{n,p}(\xf)\}}}}^2} \nonumber\\ &\hspace{1cm}+ \left(\frac{c_{n,p}}{h_n(x)}\right)^2\hspace{-0.1cm}. \nonumber
		\end{align}

		\
		
		\textbf{(4.1)} Let $\bm{A_{41} \doteq A_4 \cap \{i: |d_p(\xf,\X_i)-d (\xf,\X_i)|\le c_{n,p}\}}$.
		In this case, by (\textit{H3.1}) we get
		\begin{align}\label{IA211}
		I_{A_{41}}^1 &\doteq \frac{c_{n,p}^2}{h^2_n(\xf)}\esp{}{\frac{\sum_{i=1}^n
				\ind_{\{i \in A_4\}}}{(\sum_{j=1}^n \ind_{\{j : d_p(\xf,\X_j) \le h_{n,p}(\xf)\}})^2} } +
		\left(\frac{c_{n,p}}{h_n(x)}\right)^2 \\& \lesssim \pa{\frac{c_{n,p}}{h_n(\xf)}}^2\hspace{-0.1cm}, \nonumber
		\end{align}
		and
		\begin{align}\label{IIA211}
		II_{A_{41}}^1 &\doteq  \frac{c_{n,p}^2}{h^2_n(\xf)}
		\esp{}{\pa{\frac{\sum_{i=1}^n\ind_{\{i \in A_4\}} }{\sum_{j=1}^n \ind_{\{j : d_p(\xf,\X_j) \le h_{n,p}(\xf)\}}}}^2}  +
		\left(\frac{c_{n,p}}{h_n(x)}\right)^2 \\ &\lesssim \pa{\frac{c_{n,p}}{h_n(\xf)}}^2\hspace{-0.2cm} . \nonumber
		\end{align}
		
		\
		
		\textbf{(4.2)} Let $\bm{A_{42} \doteq A_4 \cap \{i:|d_p(\xf,\X_i)-d (\xf,\X_i)| > c_{n,p}\}} 
		$. Let us define the i.i.d. random variables $Z_i \doteq d_p(\xf,\X_i)-d (\xf,\X_i)$, $i = 1,\ldots,
		n$.
		Since $d_p(\xf,\X_i)\le h_{n,p}(\xf)$ and $d(\xf,\X_i) \le  3h_n(\xf)$ we
		have that $|Z_i| \le h_{n,p}(\xf) + 3h_n(\xf)$. Observe that, from (\textit{H3.2}) and
		(\textit{H3.1}), respectively, for $n$ large enough we have
		\[
		h_{n,p} \le h_n(\xf) + C_2c_{n,p} \le C h_n(\xf).
		\]
		Which implies that, for $n$ large enough, $|Z_i| \le C h_n(\xf)$. Therefore,
		\begin{align}\label{I212}
		I_{A_{42}}^1 &\doteq  \frac{1}{h^2_n(\xf)} \esp{}{\sum_{i = 1}^n |Z_i|^2  \ind_{\{i:c_{n,p} \le |Z_i|\le C h_n(\xf)\}} }  \nonumber \\ &\hspace{0.5cm} + \left(\frac{c_{n,p}}{h_n(x)}\right)^2 \\ &\le \frac{1}{h^2_n(\xf)} \esp{}{\sum_{i = 1}^n 
			|Z_i|^2  \ind_{\{i:c_{n,p} \le |Z_i|\le C h_n(\xf)\}}} \nonumber \\ &\hspace{0.5cm} +
		\left(\frac{c_{n,p}}{h_n(x)}\right)^2   \nonumber \\ &\le \frac{n}{h^2_n(\xf)} \esp{}{|Z_1|^2
			\ind_{\{c_{n,p} \le |Z_1|\le C h_n(\xf)\}}}+ \left(\frac{c_{n,p}}{h_n(x)}\right)^2
		&\pa{\#A_{42} \le n} \nonumber\\ &\lesssim \frac{n}{h_n(\xf)}
		\esp{}{|Z_1|\ind_{\{c_{n,p} \le |Z_1|\le C h_n(\xf)\}}}+ \left(\frac{c_{n,p}}{h_n(x)}\right)^2\hspace{-0.1cm}.
		&\pa{|Z_1| \lesssim h_n(\xf)} \nonumber
		\end{align}
		On the other hand,
		\begin{align}\label{II212}
		II_{A_{42}}^1 &\doteq  \frac{1}{h^2_n(\xf)} \esp{}{\pa{\sum_{i = 1}^n |Z_i|\ind_{\{i:c_{n,p} \le |Z_i|\le C h_n(\xf)\}}}^2} \nonumber  \\ &\hspace{0.5cm}+ \left(\frac{c_{n,p}}{h_n(x)}\right)^2  \\&\le \frac{1}{h^2_n(\xf)}
		\esp{}{\pa{\sum_{i=1}^n |Z_i|\ind_{\{i:c_{n,p} \le |Z_i|\le C h_n(\xf)\}}}^2} \nonumber 
		\\ &\hspace{0.5cm}+\left(\frac{c_{n,p}}{h_n(x)}\right)^2\hspace{-0.1cm}. \nonumber
		\end{align}
		Observe that, for $i\neq j$, $Z_i$ is independent of $Z_j$ then,
		\begin{align*}
		&\esp{}{\pa{\sum_{i = 1}^n |Z_i|\ind_{\{i:c_{n,p} \le |Z_i|\le C h_n(\xf)\}}}^2} \\&= 
		\esp{}{\sum_{i = 1}^n \sum_{j=1}^n  |Z_i||Z_j| \ind_{\{i:c_{n,p} \le |Z_i|\le C h_n(\xf)\}}
			\ind_{\{j:c_{n,p} \le |Z_j|\le C h_n(\xf)\}}}\\ &= \esp{}{\sum_{i = 1}^n |Z_i|^2
			\ind_{\{i:c_{n,p} \le |Z_i|\le C h_n(\xf)\}}} \\ &\hspace{0.3cm} + \mathbb E \Bigg(\sum_{i = 1}^n
		\sum_{\substack{j=1 \\ j\neq i}}^n |Z_i||Z_j| \ind_{\{i:c_{n,p} \le |Z_i|\le C
			h_n(\xf)\}}\ind_{\{j:c_{n,p} \le |Z_j|\le C h_n(\xf)\}}\Bigg) \\ &\le n \esp{}{|Z_1|^2
			\ind_{\{c_{n,p} \le |Z_1|\le C h_n(\xf)\}}} \\
		&\hspace{0.3cm} + n^2 \esp{}{|Z_1|\ind_{\{c_{n,p} \le |Z_1|\le C h_n(\xf)\}}} \esp{}{|Z_1|
			\ind_{\{c_{n,p} \le |Z_1|\le C h_n(\xf)\}}} \\ &\lesssim n h_n(x)
		\esp{}{|Z_1|\ind_{\{c_{n,p} \le |Z_1|\le C h_n(\xf)\}}} &\hspace{-1.5cm}\pa{|Z_1| \lesssim h_n(\xf)}
		\\ &\hspace{0.3cm} + n^2 \pa{\esp{}{|Z_1|\ind_{\{c_{n,p} \le |Z_1|\le C h_n(\xf)\}}}}^2\hspace{-0.1cm}.
		\end{align*}
		{Using this bound  in} (\ref{II212}) we get,
		\begin{align}\label{II212-1}
		II_{A_{42}}^1 &\lesssim \frac{n}{h_n(\xf)} \esp{}{|Z_1|\ind_{\{c_{n,p} \le |Z_1|\le C h_n(\xf)\}}}
		\\
		&\hspace{0.5cm}+ \frac{n^2}{h_n^2(\xf)} \pa{\esp{}{|Z_1|\ind_{\{c_{n,p} \le |Z_1|\le C
					h_n(\xf)\}}}}^2+ \left(\frac{c_{n,p}}{h_n(x)}\right)^2\hspace{-0.1cm}. \nonumber
		\end{align}
		We need to compute the expectation
		$\esp{}{|Z_1|\ind_{\{c_{n,p} \le|Z_1|\le C h_n(\xf)\}}}$ which is,
		\begin{align*}
		\esp{}{|Z_1|\ind_{\{c_{n,p} \le|Z_1|\le C h_n(\xf)\}}} &= \int_{c_{n,p}}^{h_n(\xf)} \prob{}{|Z_1| >
			t } \, dt \\ &\le  \prob{}{|Z_1| > c_{n,p}}  \int_{c_{n,p}}^{h_n(\xf)}\, dt \\ &\le\prob{}{|Z_1| >
			c_{n,p}} h_n(\xf).
		\end{align*}
		Therefore, with this inequality in (\ref{I212}) we have
		\begin{align}\label{IA212}
		I_{A_{42}}^1 &\lesssim n \prob{}{|Z_1| > c_{n,p}} + \left(\frac{c_{n,p}}{h_n(x)}\right)^2
		\\ &=  n \prob{}{|d_p(\xf,\X_1)-d (\xf,\X_1)| > c_{n,p}} +
		\left(\frac{c_{n,p}}{h_n(x)}\right)^2\hspace{-0.1cm}, \nonumber 
		\end{align}
		and,  with the same inequality in (\ref{II212-1}), 
		\begin{align}\label{IIA212}
		II_{A_{42}}^1 &\lesssim  n \prob{}{|Z_1| > c_{n,p}} + ( n \prob{}{|Z_1| > c_{n,p}} )^2 +
		\left(\frac{c_{n,p}}{h_n(x)}\right)^2  \\ &= n \prob{}{|d_p(\xf,\X_1)-d (\xf,\X_1)| >
			c_{n,p}} \nonumber \\ &\hspace{0.5cm}+ ( n \prob{}{|d_p(\xf,\X_1)-d (\xf,\X_1)| > c_{n,p}} )^2 + \left(\frac{c_{n,p}}{h_n(x)}\right)^2\hspace{-0.1cm}. \nonumber
		\end{align}
		
		\
		
		Then, with (\ref{IA211}) and (\ref{IA212}) in (\ref{IA221}) we get
		\begin{equation}\label{IA22.1}
		I_{A_4}^1 \lesssim \pa{\frac{c_{n,p}}{h_n(\xf)}}^2 + n \prob{}{|d_p(\xf,\X_1)-d (\xf,\X_1)| >
			c_{n,p}}.
		\end{equation}
		and, with (\ref{IIA211}) and (\ref{IIA212}) in (\ref{IIA221}),
		\begin{align}\label{IIA22.1}
		II_{A_4}^1 &\lesssim \pa{\frac{c_{n,p}}{h_n(\xf)}}^2 + n \prob{}{|d_p(\xf,\X_1)-d (\xf,\X_1)| >
			c_{n,p}} \\ &\hspace{2cm}+ ( n \prob{}{|d_p(\xf,\X_1)-d (\xf,\X_1)| > c_{n,p}} )^2\hspace{-0.1cm}. \nonumber
		\end{align}
		On the other hand, observe that $I_{A_4}^2  = \esp{}{\pa{\frac{\sum_{j=1}^n |K_j
					- K_{j,p}|}{\sum_{j=1}^n K_{j,p}}}^2}$. Since $A_4^c = \{j: d(\xf,\X_j) > 3h_n(\xf)\}
		\cup \{j: d_p(\xf,\X_j) >  h_{n,p}(\xf)\}$ we can write,
		\begin{align*}
		\frac{\sum_{j=1}^n |K_j - K_{j,p}|}{\sum_{j=1}^n K_{j,p}} &\le \frac{\sum_{j=1}^n |K_j
			- K_{j,p}| \ind_{\{j \in A_4\}}}{\sum_{j=1}^n K_{j,p}} \\ &\hspace{1cm} + \frac{\sum_{j=1}^n |K_j
			- K_{j,p}|\ind_{\{j: d(\xf,\X_j) >  3h_n(\xf)\}}}{\sum_{j=1}^n K_{j,p}} \\
		&\hspace{1cm}+\frac{\sum_{j=1}^n |K_j - K_{j,p}|\ind_{\{j: d_p(\xf,\X_j) > 
				h_{n,p}(\xf)\}}}{\sum_{j=1}^n K_{j,p}}.
		\end{align*}
		Using that $K$ is regular and that $\sum_{j=1}^n K_{j,p} \ge 1$  {(this is since $\{j : d_p(\xf,\X_j) \le h_{n,p}(\xf)\} \neq \emptyset$) }
		we get,
		\begin{align*}
		I_{A_4}^2 &=  \esp{}{\pa{\frac{\sum_{j=1}^n |K_j - K_{j,p}|}{\sum_{j=1}^n
					K_{j,p}}}^2} \nonumber \\&\lesssim  II_{A_4}^1 +  \esp{}{\pa{\sum_{j=1}^n |W_{j,p}| \ind_{\{j:
					d_p(\xf,\X_j) \le h_{n,p}(\xf), d(\xf,\X_j) >  3h_n(\xf)\}}}^2} \nonumber \\
		&\hspace{1cm}+\frac{\sum_{j=1}^n K_j \ind_{\{j: d_p(\xf,\X_j) > h_{n,p}(\xf)\}}}{\sum_{j=1}^n
			K_{j,p}}  \\ &\lesssim II_{A_4}^1 + II_{A_3} 
		+ \esp{}{\pa{\sum_{j=1}^n \ind_{\{j: d_p(\xf,\X_j) > h_{n,p}(\xf), d(\xf,\X_j) \le h_n(\xf)\}}}^2}
		\nonumber \\ &\le II_{A_4}^1 + II_{A_3} + C_{A_2}, 
		\end{align*}
		where $II_{A_4}^1$ was defined in (\ref{IIA22a}), $II_{A_3}$ in (\ref{IIA21a}), and
		$C_{A_2}$ in (\ref{cotaA1}). Then, from (\ref{IIA22.1}), (\ref{IIA21}), and (\ref{IIA1}) we have
		\begin{align}\label{IA22.2-IIA22.2}
		I_{A_4}^2 &\lesssim \pa{\frac{c_{n,p}}{h_n(\xf)}}^2 + n \prob{}{|d_p(\xf,\X_1)-d (\xf,\X_1)| > c_{n,p}}  \\ &\hspace{1cm}+ ( n \prob{}{|d_p(\xf,\X_1)-d (\xf,\X_1)| >
			c_{n,p}} )^2\hspace{-0.1cm}. \nonumber
		\end{align}
		Therefore, with (\ref{IA22.1}) and (\ref{IA22.2-IIA22.2}) in (\ref{IA22a}) we have
		\begin{align}\label{IA22}
		I_{A_4} &\lesssim \pa{\frac{c_{n,p}}{h_n(\xf)}}^2 + n \prob{}{|d_p(\xf,\X_1)-d (\xf,\X_1)| >
			c_{n,p}} \\ &\hspace{3cm}+ ( n \prob{}{|d_p(\xf,\X_1)-d (\xf,\X_1)| > c_{n,p}} )^2\hspace{-0.1cm},  \nonumber 
		\end{align}
		and with (\ref{IIA22.1}) and (\ref{IA22.2-IIA22.2}) in  in (\ref{IIA22a}),
		\begin{align}\label{IIA22}
		II_{A_4} &\lesssim \pa{\frac{c_{n,p}}{h_n(\xf)}}^2 + n \prob{}{|d_p(\xf,\X_1)-d (\xf,\X_1)| >  c_{n,p}}\\ &\hspace{3cm}+ ( n \prob{}{|d_p(\xf,\X_1)-d (\xf,\X_1)| > c_{n,p}} )^2\hspace{-0.1cm}.  \nonumber 
		\end{align}
	\end{enumerate}
	Finally, to complete the proof of this result (i.e. that $I$ and $II$ converge to zero) we need to
	show
	that the expectation on $\X$ of
	\begin{align*}
	& \pa{\frac{c_{n,p}}{h_n(\xf)}}^2 + n \prob{\X_1}{|d_p(\xf,\X_1)-d (\xf,\X_1)| > c_{n,p}} \nonumber \\& \hspace{4cm} + ( n \mathbb{P}^2_{\X_1}{|d_p(\xf,\X_1)-d (\xf,\X_1)| > c_{n,p}}),
	\end{align*}
	{converges  to zero. In order to show it,} recall that from \textit{H2} we have
	\[
	n^2 \esp{\X}{\mathbb{P}^2_{\X_1|\X}{\big(|d_p(\X,\X_1) - d(\X,\X_1)|  \ge c_{n,p}}\big) \big|\X \in 
		\sop{\mu}} \to 0,
	\]
	and consequently, by Cauchy Schwartz inequality
	\[
	n \esp{\X}{\prob{\X_1|\X}{|d_p(\X,\X_1) - d(\X,\X_1)|  \ge c_{n,p}}) \big|\X \in \sop{\mu}} \to 0.
	\]
	In addition from (\textit{H3.1}) we have,
	\[
	\esp{\X}{\pa{\frac{c_{n,p}}{h_n(\X)}}^2} \to 0.
	\]
	Therefore, taking expectation with respect to $\X$ in (\ref{IA1}), (\ref{IIA1}), (\ref{IA21}),
	(\ref{IIA21}), (\ref{IA22}), and, (\ref{IIA22}), we prove \textit{Part (a)} of the Theorem.
	
	\textit{Proof of (b):} The only difference with \textit{item (a)} is the convergence of
	term $III$ to zero which is ensured by Proposition \ref{consistencia_knn-k}.
\end{proof}

\begin{proof}[Proof of Theorem \ref{teo4}]
	Let $\gamma_n \to \infty$ as $n\to \infty$ a sequence such that, as $n,p\to \infty$, 
	$\esp{\X}{\gamma_n
		\left(\frac{c_{n,p}}{h_n(\X)}\right)^2} \to 0$ and, for each $i=1,\ldots,n$,	
		$$\gamma_n
	n^2\esp{\X}{\mathbb{P}^2_{\X_i|\X}{\big(\abs{d(\X,\X_i)-d_p(\X, \X_i)} \ge c_{n,p} \Big|\X \in 
			\sop{\mu}}\big)} \to 0.$$ From proof of Theorem \ref{teo3} we get,
	\begin{align*}
	\esp{}{\gamma_n (\etahat_{n,p}(\X) - \eta(\X))^2} &\lesssim \gamma_n n
	\esp{\X}{\prob{\X_1}{d_p(\xf,\X_1) - d(\xf,\X_1) \ge c_{n,p}}} \\
	&\hspace{0.5cm}+ \esp{\X}{\gamma_n \left(\frac{c_{n,p}}{h_n(\X)}\right)^2} \\
	&\hspace{0.5cm}+ 
	\esp{}{\gamma_n(\etahat_n(\X)  - \eta(\X))^2},
	\end{align*}
	from what follows that,
	\[
	\lim_{n,p \to \infty}  \esp{}{\gamma_n (\etahat_{n,p}(\X) - \eta(\X))^2}
	= 0.
	\]
\end{proof}

%

\markboth{\hfill{\footnotesize\rm Liliana Forzani, Ricardo Fraiman and Pamela Llop} \hfill}
{\hfill {\footnotesize\rm Functional Nonparametric Regression} \hfill}

\bibhang=1.7pc
\bibsep=2pt
\fontsize{9}{14pt plus.8pt minus .6pt}\selectfont
\renewcommand\bibname{\large \bf References}

\end{document}